\newcommand{\be}{\begin{eqnarray}}
\newcommand{\ben}{\begin{eqnarray*}}
\newcommand{\en}{\end{eqnarray}}
\newcommand{\enn}{\end{eqnarray*}}
\newtheorem{theorem}{Theorem}[section]
\newtheorem{lemma}[theorem]{Lemma}
\newtheorem{definition}[theorem]{Definition}
\begin{document}
\renewcommand{\theequation}{\arabic{section}.\arabic{equation}}

\title{\bf   Factorization method for near-field inverse scattering problems in elastodynamics}

\author{Chun Liu\thanks{School of Mathematical Sciences, Nankai University,
Tianjing 300071, China  ({\tt liuchun@nankai.edu.cn})}
\and Guanghui Hu\thanks{School of Mathematical Sciences, Nankai University,
Tianjing 300071, China ({\tt ghhu@nankai.edu.cn})}
\and Tao Yin\thanks{LSEC, Institute of Computational Mathematics and Scientific/Engineering Computing, Acad-emy of Mathematics and Systems Science, Chinese Academy of Sciences, Beijing 100190, China, andSchool of Mathematical Sciences, University of Chinese Academy of Sciences, Beijing 100049, China  ({\tt yintao@lsec.cc.ac.cn})}
\and Bo Zhang\thanks{LSEC and Academy of Mathematics and Systems Sciences, Chinese Academy of Sciences,
Beijing, 100190, China and School of Mathematical Sciences, University of Chinese Academy of Sciences,
Beijing 100049, China ({\tt b.zhang@amt.ac.cn})}}
 
\date{}

\maketitle


\begin{abstract}
Consider a time-harmonic elastic point source incident on a bounded obstacle which is embedded in an open space filled with a homogeneous and isotropic elastic medium. This paper is concerned with the inverse problem of recovering the location and shape of the obstacle from near-field data generated by infinitely many incident point source waves at a fixed energy. The incident point sources and the receivers for recording scattered signals are both located on a spherical closed surface, on which an outgoing-to-incoming operator is defined for facilitating the factorization of the near-field operator. Numerical examples in 2D are presented to show the validity and accuracy of the inversion algorithm.

\vspace{.2in}

{\bf Keywords:} Factorization method, inverse scattering, near-field data, Navier equation, point sources.

\end{abstract}

\maketitle

\section{Introduction}

The obstacle scattering problem is one of the fundamental scattering problems in scattering theory. The direct obstacle scattering problem involves determining the scattered field from the known obstacle and given incident field, while the inverse obstacle scattering problem aims to reconstruct the shape and location of the obstacle from the measurement of the scattered field. It has been widely studied due to its diverse applications in many scientific areas \cite{JS1958,JH1989}, such as radar and sonar, medical imaging, and geophysical exploration \cite{JP1977,HP2008}. Unlike acoustic and electromagnetic waves, elastic wave is governed by the Navier equation, which is more complex due to the coupling of longitudinal and transverse waves that propagate at different speeds. Therefore, it presents challenges for inverse elastic obstacle scattering problem on mathematical theory and computation.

Over the past few decades, significant progress has been made in both theoretical analysis and numerical methods for the inverse elastic obstacle scattering problem. The uniqueness results using infinitely many incident plane waves have been established in \cite{peter1993uniqueness, Hu-K-S2012} with full-phase far-field data and in \cite{Cheng-Dong2024} using phaseless far-field data with a reference ball. Additionally,  uniqueness results using infinitely many incident point sources were obtained in \cite{A-K2002} with near-field data. Similar to the acoustic case, this uniqueness result with near-field data could also be easily proved by using Rellich's lemma and the mixed reciprocity relation for elastic waves. The numerical methods can be broadly classified into two types: the
quantitative method and the qualitative method. Quantitative methods, such as optimization-based iterative approaches,
include the domain derivatives \cite{Li-W-Z2016}, the continuation method \cite{Y-L-L-Y2019}. Qualitative methods, imaging based direct method, refers to the sampling method and its variants, such as the linear sampling method \cite{Arens2001}, the direct sampling method \cite{Ji-L-Y2018}, and the factorization method \cite{E-H2019}. While iterative methods require prior information about the geometry and physical properties of the obstacle and involve solving adjoint direct scattering problems at each iteration, sampling methods overcome these difficulties but may yield less accurate reconstructions compared to iterative methods.

The basic idea of the sampling methods is to design an indicator function which attains large values inside the underlying scatterer and relative small values outside. In all of the sampling methods, the measurements may be divided into two types: far-field data and near-field data. However, the theoretical analysis for far-field data is more developed than that for near-field data. For the near-field case, \cite{H-Y2014} established the factorization method for the near-field operator by introducing the outgoing-to-incoming(OtI) operator.  In their work, the OtI operator for the Helmholtz equation was constructed on a spherical surface and the factorization method worked for recovering both impenetrable and penetrable scatterers.  This approach was later extended by \cite{Y-H-X-Z2016} to the inverse fluid-solid interaction problem with non-spherical measurement surface.  Additionally, \cite{I-J-Z2012} established a direct sampling method using near-field measurements for inverse acoustic obstacle scattering, and \cite{L-M-X-Z2022} proposed a modified sampling method applicable to inverse acoustic cavity scattering problems.

In this paper, we consider an inverse elastic scattering problem of determining the shape and location of an obstacle using near-field measurement data. The obstacle is assumed to be an elastically rigid body embedded in an open space filled with a homogeneous and isotropic elastic medium.  Both the incident point sources and the receivers for measuring the scattered field are located on a spherical closed surface. Motivated by the works of \cite{H-Y2014} and \cite{Y-H-X-Z2016}, we propose a factorization method for this inverse problem.
 
The paper is organized as follows: In Section \ref{sec1}, we formulate the problem in three dimensions. Section \ref{sec2} introduces the definition of the outgoing-to-incoming operator and the factorization of the near-field operator. Finally, Section \ref{Numeric} presents numerical experiments conducted in two dimensions to validate the proposed method.

\section{Problem formulation}\label{sec1}

This paper is concerned with the time-harmonic scattering of elastic waves by a bounded rigid obstacle at a fixed frequency. Denoted by $D\subset \mathbb{R}^3$ the bounded obstacle with the $ C^2-$smooth boundary $\partial D$. The exterior $D^c:=\mathbb{R}^3 \setminus \overline{D}$, assumed to be connected, is characterized by a mass density $\rho>0$ and Lam$\acute{e}$ constants $\lambda $ and $\mu $, satisfying $\mu>0$ and $2\mu +3\lambda >0$. Without loss of generality, we set $\rho\equiv 1$ in $ \mathbb{R}^3$. Let $B_R:=\{\bm{x}=(x_1,x_2,x_3)^\top\in \mathbb{R}^3:|\bm{x}|<R\}$ be a ball of radius $R>0$ such that $\overline{D}\subset B_R$, and let $S_R:=\{\bm{x}\in \mathbb{R}^3:|\bm{x}|=R\}$ denote the boundary of  $B_R $. Here $(\cdot)^\top$ represents the transpose of a vector or a matrix. We write the unit sphere in $\mathbb{R}^3$ as $\mathbb{S}^2:= \{\bm{x}\in \mathbb{R}^3:|\bm{x}|=1\}=S_1. $

The propagation of time-harmonic elastic waves in $D^c$ is governed by the Navier equation  

\begin{equation}\label{Navier-equation}
\Delta^* \bm{u} + \omega^2 \bm{u} = \bm{0},\ \  \bm{x} \in D^c,  \qquad \ \Delta^*=\mu\Delta +(\lambda+\mu)\nabla(\nabla\cdot),
\end{equation}
where $\bm{u}$ represents the scattered field and the angular frequency $\omega>0$ is constant. The solution $\bm{u}$ of (\ref{Navier-equation}) can be decomposed into te sum of compressional and shear waves,
\begin{equation}\label{H-decomposition}
\nonumber
\bm{u}=  \bm{u}_p+ \bm{u}_s,\ \bm{u}_p=- k_p ^{-2}\nabla (\nabla\cdot \bm{u}),\ \ \bm{u}_s=- k_s ^{-2}\nabla\times(\nabla\times \bm{u}),
\end{equation}
where $ k_p:= \omega/\sqrt{2\mu +\lambda } ,\ \ k_s:= \omega/ {\sqrt{\mu}}$ are the compressional and shear wave numbers, respectively. The components $ \bm{u}_p$ and $\bm{u}_s $ satisfy the vector Helmholtz equations 
\begin{equation*}
\Delta \bm{u}_p +  k_p^2 \bm{u}_p = \bm{0},\qquad  \Delta \bm{u}_s +  k_s^2 \bm{u}_s =\bm{0},\qquad \bm{x} \in D^c. 
\end{equation*}
Since the obstacle is elastically rigid, we have the inhomogeneous Dirichlet boundary condition 
\begin{equation}\label{boundary}
\bm{u}=-\bm{u}^{in},  \qquad \bm{x}\in \partial D,
\end{equation}
where $ \bm{u}^{in} $ is the incident field. The surface stress (or traction) operator $T$ on the boundary $\partial D$ is defined as 
\begin{equation}\label{surface operator}
\nonumber
  T\bm{u}= 2 \mu  \bm{n} \cdot \nabla\bm{u}+\lambda  \bm{n}\nabla\cdot \bm{u} + \mu   \bm{n} \times \nabla\times \bm{u},
\end{equation}
where $ \bm{n}$ is the unit normal vector on $\partial D$ 
directed into the exterior of $D$.

 In this work, we consider the point source incident wave $\bm{u}^{in}(\bm{x},\bm{y};\mathbf{a})$  
  with the polarization direction $\bm{a}\in  \mathbb{S}^2$ 
 \begin{equation*}
 \bm{u}^{in}(\bm{x},\bm{y}; \bm{a})=\Pi(\bm{x},\bm{y})\bm{a},\qquad \bm{x}\neq \bm{y},\ \bm{y}\in D^c,
 \end{equation*}
where $ \Pi(\bm{x},\bm{y}) $ is the free-space Green's tensor to the Navier equation
\begin{equation*}
 \Pi(\bm{x},\bm{y})=\frac{1}{\mu}\Phi_{k_s}(\bm{x},\bm{y})\mathbf{I}+\frac{1}{\omega^2}\nabla_{\bm{x}}\nabla_{\bm{x}}^\top[\Phi_{k_s}(\bm{x},\bm{y})-\Phi_{k_p}(\bm{x},\bm{y})],\qquad \bm{x}\neq \bm{y}.
 \end{equation*}
Here, $\mathbf{I}$ stands for the $3\times 3$ identity matrix, $ \Phi_{k}(\bm{x},\bm{y})=\exp(ik|\bm{x}-\bm{y}|)/(4\pi|\bm{x}-\bm{y}|) $ is the free-space fundamental solution to the Helmholtz equation $(\Delta + k^2) u=0$ in $ \mathbb{R}^3$. The scattered field corresponding to $\bm{u}^{in}(\bm{x},\bm{y}; \bm{a}) $ is denoted as $ \bm{u}=\bm{u}(\bm{x},\bm{y}; \bm{a})$. 
  
The scattered field $\bm{u}$ is required to satisfy the Kupradze radiation condition
\begin{align}\label{radiation}
 \lim_{r=|\bm{x}|\rightarrow \infty }r\left[\frac{\partial \bm{u}_\alpha(\bm{x})}{\partial r}-i k_\alpha\mathbf{ u}_\alpha(\bm{x})\right]=0,\qquad \alpha=p,s,
\end{align}
uniformly in all directions $\hat{\bm{x}}=\bm{x}/{|\bm{x}|}\in \mathbb{S}^2$. This condition (\ref{radiation}) is known as the Sommerfeld condition for the compressional and shear parts of $\bm{u} $. The radiation conditions in (\ref{radiation}) lead to the P-part $\bm{u}_p^{\infty}$ and S-part $\bm{u}_s^{\infty}$ of the far-field pattern of $\bm{u}$, given by the asymptotic behavior
\begin{equation*}\label{far}
\bm{u}(\bm{x})=\frac{\mathrm{exp}(ik_p |\bm{x}|)}{|\bm{x}|}\bm{u}_p^{\infty}(\hat{\bm{x}})+\frac{\mathrm{exp}(ik_s |\bm{x}|)}{|\bm{x}|}\bm{u}_s^{\infty}(\hat{\bm{x}})+\mathcal{O}\left(\frac{1}{|\bm{x}|^2}\right),\ |\bm{x}|\rightarrow \infty,
\end{equation*}
where $\bm{u}_p^{\infty}$ and $\bm{u}_s^{\infty}$ are the far-field part of $\bm{u}_p $ and $\bm{u}_s $, respectively. In this paper, we define the far-field pattern $\bm{u}^{\infty}$ of the scattered field $\bm{u}$ as the sum of $\bm{u}_p^{\infty}$ and $\bm{u}_s^{\infty}$, which means $\bm{u}^{\infty}:= \bm{u}_p^{\infty}+\bm{u}_s^{\infty} $.

Given the incident field $ \bm{u}^{in}$, the \textit{direct scattering problem} (DP) for (\ref{Navier-equation})-(\ref{radiation}) is to find the scattered field $\bm{u}$ for a known obstacle $D$. By using variational approach in combination with properties of transparent operator for the Navier equation (\ref{Navier-equation}), \cite{Li-Y2019} shows that the problem (DP) admits at most one solution $\bm{u}\in [H_{loc}^1( D^c )]^3$. The inverse scattering problem is to reconstruct the shape and location of the obstacle $D$ from the near-field data
$\{\bm{u}(\bm{x},\bm{y};\bm{a_i}): \bm{x},\bm{y}\in S_R, \bm{a_i}\in \mathbb{S}^2,i=1,2,3\}$, where $\bm{a_i}(i=1,2,3)$ are three linearly independent polarizations. Theorem 3 and Theorem 4 in \cite{peter1993uniqueness} prove that the far-field data for incident plane waves $ \bm{u}^{in}(\bm{x})=\bm{d}\exp(ik_p\bm{d}\cdot\bm{x})+\bm{q}\exp(ik_s\bm{d}\cdot\bm{x})$ for all $\bm{d},\bm{q}\in \mathbb{S}^2, \bm{d}\cdot\bm{q}=0 $ at a fixed frequency $\omega$ can uniquely determine the scatterer $D.$ By using Rellich's lemma and the mixed reciprocity relation (see \cite{XY}), we can obtain that the near-field data for incident point source can uniquely reconstruct the obstacle $D.$

\section{Factorization of near-field operator} \label{sec2}

For $ \bm{g}\in [L^2(S_R)]^3 $, we define the incident field $\bm{v}_{\bm{g}}^{ in }$ as 
\begin{equation}\label{new-incident}
 \bm{v}_{\bm{g}}^{ in }(\bm{x})=\int_{S_R}\Pi(\bm{x},\bm{y})\bm{g}(\bm{y}){\rm{d}}s(\bm{y}),\qquad \bm{x}\in  \mathbb{R}^3.
\end{equation}
The scattered field corresponding to $\bm{v}_{\bm{g}}^{ in }$ takes the form
\begin{equation}\label{new-scattered}
 \bm{v}_{\bm{g}}(\bm{x})=\int_{S_R}\bm{u} (\bm{x},\bm{y};\bm{g}(\bm{y})){\rm{d}}s(\bm{y}),\qquad \bm{x}\in D^c,
\end{equation}
where $\bm{u} (\bm{x},\bm{y};\bm{g}(\bm{y}))$ represents the scattered field generated by incident field $\Pi(\bm{x},\bm{y})\bm{g}(\bm{y}).$ Then we can define the near-field operator  $N: [L^2(S_R)]^3\rightarrow [L^2(S_R)]^3$ by
\begin{equation}\label{Near-field-operator}
(N\bm{g})(\bm{x})=\int_{S_R}\bm{u} (\bm{x},\bm{y};\bm{g}(\bm{y})){\rm{d}}s(\bm{y}),    \qquad \bm{x}\in S_R.
\end{equation}
Clearly, $N\bm{g}$ is the restriction to $S_R$ of the scattered field $\bm{v}_{\bm{g}}$ generated by the incident wave $\bm{v}_{\bm{g}}^{ in }$.

In this section, we will establish a factorization of the near-field operator $N$ for incident point sources. Motivated by \cite{H-Y2014} in acoustics, we introdunce the outgoing-to-incoming operator $\mathcal{T}$ and factorization of near-field operator inorder to get a symmetric factorization of $N$.

\subsection{Solution operator}\label{sec-solution}
\begin{definition}
Given $\bm{h}\in [H^{\frac{1}{2}}(\partial D)]^3$, denoted by $\bm{v}\in [H_{loc}^1( D^c )]^3$ the unique solution to the boundary value problem
\begin{align*}
\Delta^* \bm{v}  + \omega^2 \bm{v}  = \bm{0},\ \  \bm{x} \in D^c,\\
\bm{v} =\bm{h},  \qquad \bm{x}\in \partial D,
\end{align*}
and that $\bm{v} $ satisfies the Kupradze radiation condition (\ref{radiation}). The solution operator is defined as  $G: [H^{\frac{1}{2}}(\partial D)]^3\rightarrow [L^2(S_R)]^3$ 
\begin{equation}\label{solution-operator}
G \bm{h}=\bm{v}|_{S_R}.
\end{equation}
\end{definition}

Clearly, $G$ maps the boundary value of a radiating solution on $\partial D$ onto the near-field pattern on $S_R$.
\begin{theorem}\label{G-com}
The solution operator $G: [H^{\frac{1}{2}}(\partial D)]^3\rightarrow [L^2(S_R)]^3$ is compact, injective with a dense range in $  [L^2(S_R)]^3 $.
\end{theorem}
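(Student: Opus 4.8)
The plan is to establish the three properties separately, using well-posedness of the exterior Dirichlet problem (the uniqueness in \cite{Li-Y2019} together with a standard Fredholm existence argument), interior elliptic regularity for the Navier system, the Kupradze uniqueness theorem, unique continuation, and Betti's identity. Throughout I would fix an annulus $A=\{R_1<|\bm{x}|<R_2\}$ with $\overline{D}\subset B_{R_1}$ and $R_2>R$, so that $A\subset D^c$ and $S_R\subset A$; the crucial geometric fact is that $S_R$ is separated from $\partial D$ by a positive distance. For compactness, I would first note that $\bm{h}\mapsto\bm{v}$ is bounded from $[H^{1/2}(\partial D)]^3$ into $[H^1(B_R\setminus\overline{D})]^3$. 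Since $\bm{v}$ solves the homogeneous equation \eqref{Navier-equation} in $A$, interior elliptic estimates give $\|\bm{v}\|_{[H^2(A')]^3}\le C\|\bm{v}\|_{[H^1(A)]^3}$ for any $A'\Subset A$ containing $S_R$. Taking the trace on $S_R$ then yields $\|G\bm{h}\|_{[H^{3/2}(S_R)]^3}\le C\|\bm{h}\|_{[H^{1/2}(\partial D)]^3}$, so that $G$ is bounded into $[H^{3/2}(S_R)]^3$; composing with the compact embedding $[H^{3/2}(S_R)]^3\hookrightarrow[L^2(S_R)]^3$ shows $G$ is compact (the same bootstrap in fact gives smoothness of $G\bm{h}$ on $S_R$).

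For injectivity, suppose $G\bm{h}=\bm{0}$, so the radiating solution $\bm{v}$ vanishes on $S_R$. In the exterior $\{|\bm{x}|>R\}$, $\bm{v}$ is a radiating solution with zero Dirichlet data on $S_R$, whence the Kupradze uniqueness theorem forces $\bm{v}=\bm{0}$ there. Since $D^c$ is connected and solutions of the Navier system are real-analytic in $D^c$, unique continuation from the open set $\{|\bm{x}|>R\}$ gives $\bm{v}\equiv\bm{0}$ in $D^c$, and restricting to $\partial D$ yields $\bm{h}=\bm{0}$.

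For the dense range I would prove the equivalent statement that the adjoint $G^*:[L^2(S_R)]^3\to[H^{-1/2}(\partial D)]^3$ is injective. Given $\bm{\phi}\in[L^2(S_R)]^3$, introduce the single-layer potential $\bm{w}(\bm{x})=\int_{S_R}\Pi(\bm{x},\bm{y})\,\overline{\bm{\phi}(\bm{y})}\,\mathrm{d}s(\bm{y})$, which solves \eqref{Navier-equation} in $B_R$ and in $\{|\bm{x}|>R\}$, is continuous across $S_R$, is radiating in the exterior, and whose traction jumps across $S_R$ by a nonzero multiple of $\overline{\bm{\phi}}$. Applying Betti's second identity to $\bm{v}$ and $\bm{w}$ over $B_R\setminus\overline{D}$ and over $\{|\bm{x}|>R\}$, using the symmetry of the exterior Dirichlet-to-Neumann map (itself a consequence of Betti's identity for two radiating fields) and the jump relation, I expect the representation $(G\bm{h},\bm{\phi})_{[L^2(S_R)]^3}=\langle\bm{h},\,T(\bm{w}-\bm{w}_0)\rangle_{\partial D}$, where $\bm{w}_0$ is the radiating solution in $D^c$ with the same Dirichlet trace as $\bm{w}$ on $\partial D$. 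Thus $G^*\bm{\phi}=\bm{0}$ is equivalent to $T\bm{w}=T\bm{w}_0$ on $\partial D$, and since $\bm{w}=\bm{w}_0$ on $\partial D$ by construction, $\bm{w}$ and $\bm{w}_0$ share the same Cauchy data there. Unique continuation in the connected set $D^c$ then forces $\bm{w}=\bm{w}_0$ throughout $D^c$, so $\bm{w}$ agrees near $S_R$ with the globally analytic field $\bm{w}_0$; in particular the traction of $\bm{w}$ cannot jump across $S_R$, which forces $\overline{\bm{\phi}}=\bm{0}$ and hence $\bm{\phi}=\bm{0}$.

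I expect the dense-range step to be the main obstacle: one must set up $G^*$ correctly through Betti's identity, carefully tracking the conjugation, the orientation of the normals on $S_R$ and on $\partial D$, the vanishing of the boundary term at infinity for two radiating fields, and the symmetry of the exterior DtN map, so that the awkward term $\langle\bm{w},T\bm{v}\rangle_{\partial D}$ is absorbed and the identity collapses to a condition on $\partial D$ alone. Once this representation is secured, the single-layer jump relation delivers $\bm{\phi}=\bm{0}$ directly, and notably no hypothesis excluding interior (Dirichlet) eigenvalues is required. By contrast, compactness and injectivity are comparatively routine, relying only on interior elliptic regularity, the compact Sobolev embedding, Kupradze uniqueness, and unique continuation.
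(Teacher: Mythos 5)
Your overall strategy coincides with the paper's on all three counts: compactness via boundedness of $\bm{h}\mapsto\bm{v}|_{S_R}$ into a smoother trace space composed with a compact Sobolev embedding (the paper uses $[H^{1/2}(S_R)]^3$, you use interior elliptic estimates to reach $[H^{3/2}(S_R)]^3$; both work); injectivity via uniqueness of the exterior Dirichlet problem plus analytic continuation; and dense range via injectivity of $G^*$, where $G^*$ is identified through Betti's identity with the traction on $\partial D$ of the total field. Your representation $(G\bm{h},\bm{\phi})_{[L^2(S_R)]^3}=\langle\bm{h},T(\bm{w}-\bm{w}_0)\rangle_{\partial D}$ is exactly the paper's identity $G^*\bm{g}=\overline{T(\bm{v}^{in}_{\overline{\bm{g}}}+\bm{v}_{\overline{\bm{g}}})}$ (with $\bm{w}=\bm{v}^{in}_{\overline{\bm{g}}}$, $\bm{w}_0=-\bm{v}_{\overline{\bm{g}}}$), and your shortcut of applying Holmgren/unique continuation directly to the difference $\bm{w}-\bm{w}_0$, which has vanishing Cauchy data on $\partial D$, legitimately bypasses the paper's intermediate step (representation formula plus Betti's formula over $D$ to show the scattered field vanishes before invoking Holmgren).

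There is, however, a genuine gap at the final step: you assert that ``unique continuation in the connected set $D^c$ forces $\bm{w}=\bm{w}_0$ throughout $D^c$,'' and deduce from this that the traction of $\bm{w}$ cannot jump across $S_R$. Unique continuation applies only where both fields solve the Navier equation; $\bm{w}$ is a single-layer potential with source surface $S_R$, so $\bm{w}-\bm{w}_0$ solves the equation in $B_R\setminus\overline{D}$ and in $\{|\bm{x}|>R\}$ but not across $S_R$ --- indeed, if $\bm{w}$ could be continued as a solution across $S_R$ there would be no jump to discuss, so the argument as written is circular. What unique continuation from $\partial D$ actually gives is $\bm{w}=\bm{w}_0$ in $B_R\setminus\overline{D}$ only, i.e., control of the one-sided limit $T\bm{w}^-$; it says nothing about $T\bm{w}^+$. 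The missing (standard) step, for which you already list the ingredients: the single-layer potential is continuous across $S_R$, so the exterior Dirichlet trace of $\bm{w}$ on $S_R$ equals $\bm{w}_0|_{S_R}$; both $\bm{w}$ and $\bm{w}_0$ are radiating in $\{|\bm{x}|>R\}$, so uniqueness of the exterior Dirichlet problem gives $\bm{w}=\bm{w}_0$ there as well; only then do both one-sided tractions equal $T\bm{w}_0$, the jump vanishes, and $\bm{\phi}=\bm{0}$. (The paper's own phrase ``$\bm{v}^{in}_{\overline{\bm{g}}}=\bm{0}$ in $\mathbb{R}^3$ by Holmgren'' glosses over the same point, but your formulation, phrased as continuation of $\bm{w}$ across $S_R$, is the one that fails as stated.) With this repair the proof is complete, and your observation that no assumption excluding interior Dirichlet eigenvalues is needed is correct --- neither your argument nor the paper's uses one for this theorem.
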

\begin{proof}
The injectivity of $G$ follows form the uniqueness of the exterior Dirichlet problem \cite{peter1993uniqueness} and the analytic continuation argument. Since the scattering problem (\ref{Navier-equation})-(\ref{radiation}) has a solution $\bm{u}\in [H_{loc}^1( D^c )]^3$, and $[H^{\frac{1}{2}}( S_R)]^3 $ can be compactly embedded into $[L^2(S_R)]^3$, $G$ is compact.

Denote $\bm{u}$ the unique solution to the scattering problem (\ref{Navier-equation})-(\ref{radiation}) with the boundary value
$ \bm{u}=\bm{f}\in [H^{\frac{1}{2}}(\partial D)]^3$. Then $\bm{u}$ can be expressed as 
\begin{equation}\label{formula}
\bm{u}(\bm{x})=\int_{\partial D}\{ T\Pi(\bm{x},\bm{y}) \bm{u}(\bm{y})-\Pi(\bm{x},\bm{y}) T\bm{u}(\bm{y})\}{\rm{d}}s(\bm{y}),\qquad \bm{x}\in  D^c.
\end{equation}
Then one has for $ \bm{g}\in [L^2(S_R)]^3 $ that
\begin{equation}\label{green1}
\begin{aligned}
(G\bm{f},\bm{g})_{[L^2(S_R)]^3}&=\int_{ S_R }\ \bm{u}(\bm{x})\cdot\overline{\bm{g}}(\bm{x}){\rm{d}}s(\bm{x})\\
&=\int_{\partial D}\{ \bm{u}(\bm{y})\cdot T \bm{v}_{\overline{\bm{g}}}^{in}(\bm{y}) - T\bm{u}(\bm{y}) \cdot  \bm{v}_{\overline{\bm{g}}}^{in}(\bm{y}) \}{\rm{d}}s(\bm{y}).
\end{aligned}
\end{equation}
As both $\bm{u}$ and $\bm{v}_{\overline{\bm{g}}}$ are radiating solutions, we find
\begin{equation}\label{green2}
\begin{aligned}
 \int_{\partial D}\{ \bm{u}(\bm{y})\cdot T \bm{v}_{\overline{\bm{g}}} (\bm{y}) - T\bm{u} (\bm{y})\cdot\bm{v}_{\overline{\bm{g}}} (\bm{y}) \}{\rm{d}}s(\bm{y})=0.
\end{aligned}
\end{equation}
Using the boundary condition $\bm{v}_{\overline{\bm{g}}}^{in}+\bm{v}_{\overline{\bm{g}}}=\bm{0}$, (\ref{green1}) and (\ref{green2}), we obtain
\begin{align*}
(G\bm{f},\bm{g})_{[L^2(S_R)]^3}&=\int_{\partial D}\  \bm{u}(\bm{y})\cdot T\left( \bm{v}_{\overline{\bm{g}}}^{in}(\bm{y})+\bm{v}_{\overline{\bm{g}}}(\bm{y})\right) {\rm{d}}s(\bm{y})\\
&=\int_{\partial D}\  \bm{f}(\bm{y})\cdot T\left( \bm{v}_{\overline{\bm{g}}}^{in}(\bm{y})+\bm{v}_{\overline{\bm{g}}}(\bm{y})\right) {\rm{d}}s(\bm{y}).
\end{align*}
As a result, the adjoint operator $G^*$ can be characterized as $G^* \bm{g} =\overline{T\left( \bm{v}_{\overline{\bm{g}}}^{in} +\bm{v}_{\overline{\bm{g}}} \right)}$. If $ G^* \bm{g}=\bm{0}$, then $  T\left( \bm{v}_{\overline{\bm{g}}}^{in} +\bm{v}_{\overline{\bm{g}}} \right)=\bm{0}$, which implies that $ T  \bm{v}_{\overline{\bm{g}}}= - T \bm{v}_{\overline{\bm{g}}}^{in}$ on $\partial D$. By virtue of (\ref{formula}) and boundary condition  $\bm{v}_{\overline{\bm{g}}}^{in}=-\bm{v}_{\overline{\bm{g}}}=\bm{0}$ on $\partial D$, one finds that
\begin{equation} 
\nonumber
\bm{v}_{\overline{\bm{g}}}(\bm{x})=-\int_{\partial D}\{ T\Pi(\bm{x},\bm{y}) \cdot\bm{v}_{\overline{\bm{g}}}^{in}(\bm{y})-\Pi(\bm{x},\bm{y}) \cdot T\bm{v}_{\overline{\bm{g}}}^{in}(\bm{y})\}{\rm{d}}s(\bm{y}), \qquad \bm{x}\in  D^c.
\end{equation}
Since both $\bm{v}_{\overline{\bm{g}}}^{in}$ and $\bm{y}\rightarrow\Pi(\bm{x},\bm{y})$ for $\bm{x}\in  D^c $ satisfy the Navier equation in $D$, the Betti’s formula applied over $D$ shows that
\begin{align*}
-\bm{v}_{\overline{\bm{g}}}(\bm{x})=&\int_{\partial D}\{ T\Pi(\bm{x},\bm{y})  \bm{v}_{\overline{\bm{g}}}^{in}(\bm{y})-\Pi(\bm{x},\bm{y})   T\bm{v}_{\overline{\bm{g}}}^{in}(\bm{y})\}{\rm{d}}s(\bm{y})\\
=&\int_{D}\{\Delta^*\Pi(\bm{x},\bm{y}) \bm{v}_{\overline{\bm{g}}}^{in}(\bm{y})-\Pi(\bm{x},\bm{y}) \Delta^*\bm{v}_{\overline{\bm{g}}}^{in}(\bm{y})\}{\rm{d}} \bm{y}\\
=&\int_{D}\{(\Delta^*\Pi(\bm{x},\bm{y})+\omega^2\Pi(\bm{x},\bm{y})) \bm{v}_{\overline{\bm{g}}}^{in}(\bm{y})-\Pi(\bm{x},\bm{y}) (\Delta^*\bm{v}_{\overline{\bm{g}}}^{in}(\bm{y})+\omega^2\bm{v}_{\overline{\bm{g}}}^{in}(\bm{y}))\}{\rm{d}} \bm{y}\\
=&\bm{0},
\end{align*}
for all $\bm{x}\in D^c $. Thus $\bm{v}_{\overline{\bm{g}}}(\bm{x})= T\bm{v}_{\overline{\bm{g}}}(\bm{x})=\bm{0}$ on $\partial D$, which means that $\bm{v}^{in}_{\overline{\bm{g}}}(\bm{x})= T\bm{v}^{in}_{\overline{\bm{g}}}(\bm{x})=\bm{0}$ on $\partial D$. Using the Holmgren's uniqueness theorem, one finds that $\bm{v}_{\overline{\bm{g}}}^{in}=\bm{0}$ in
$  \mathbb{R}^3$. The jump relation shows that $\bm{g}=\mathcal{T}(\bm{v}_{\overline{\bm{g}}}^{in})^{-}-\mathcal{T}(\bm{v}_{\overline{\bm{g}}}^{in})^{+}$, where the superscripts $'-'$ and $'+'$ denote respectively the limits from inside and outside of $B_R$. Thus $\mathbf{g}=\bm{0},$ which means that $G^*$ is injective. Hence $G$ has dense range by Theorem 4.6 in\cite{Colton2013}.
\end{proof}

Define the incidence operator $H: [L^2(S_R)]^3\rightarrow [H^{\frac{1}{2}}(\partial D)]^3$ by
\begin{equation*}\label{incidence-operator}
(H\bm{g})(\bm{x})=\int_{S_R}\Pi(\bm{x},\bm{y}) \bm{g}(\bm{y}){\rm{d}}s(\bm{y}), \qquad \bm{x}\in \partial D.
\end{equation*}
Since $\Pi(\bm{x},\bm{y})=\Pi(\bm{x},\bm{y})^\top$, the $L^2$ adjoint $H^*:[H^{-\frac{1}{2}}(\partial D)]^3\rightarrow [L^2(S_R)]^3$ of $H$ is given by
\begin{equation*}\label{incidence-op-adjoint}
(H^*\bm{\varphi})(\bm{x})=\int_{\partial D}\overline{\Pi(\bm{x},\bm{y})} \bm{\varphi}(\bm{y}){\rm{d}}s(\bm{y}), \qquad \bm{x}\in S_R.
\end{equation*}
Here, for a matrix $A=(a_{ij})$, we define $\overline{A}:=(\overline{a_{ij}})$.
 From the definition of $N, G$ and $H$, the following relation holds:
\begin{equation}\label{factorization1}
N=-GH.
\end{equation}

\subsection{Outgoing-to-incoming operator}
In this subsection we give the definition of OtI operator on a spherical surface. Denote $(r,\theta,\varphi) $ the spherical coordinates of $ \bm{x}\in \mathbb{R}^3$, where $\theta\in [0,2\pi]$
and $\varphi\in [0, \pi]$ are the Euler angles. Let
\begin{align*}
\widehat{\bm{r}}&= (\sin \theta\cos\varphi  ,\sin \theta\sin \varphi,\cos \theta)^\top,\\
\widehat{\bm{\theta}}&= (\cos \theta \cos \varphi,\cos\theta\sin  \varphi,-\sin \theta)^\top,\\
\widehat{\bm{\varphi}}&= (-\sin \varphi , \cos \varphi, 0)^\top 
\end{align*}
be the unit vectors in the spherical coordinates. $ \{\widehat{\bm{r}},\widehat{\bm{\theta}},\widehat{\bm{\varphi}}\}$ forms a local orthonormal basis, and $\widehat{\bm{r}}$ is also the unit outward normal vector on $S_R$.

Let $\{Y_n^m(\theta,\varphi):n=0,1,2\ldots,m=-n.\ldots n\}$ be the orthonormal sequence of spherical harmonic functions of order $n$ on the unit sphere. Define the vector spherical harmonics:
\begin{align*}
\bm{U}_n^m(\widehat{\bm{x}})&:=\frac{1}{\sqrt{\delta_n}}\nabla_{ \mathbb{S}^2}Y_n^m(\widehat{\bm{x}})  ,\\
\bm{V}_n^m(\widehat{\bm{x}})&:=  \widehat{\bm{x}} \times \bm{U}_n^m(\widehat{\bm{x}}),\\
\bm{W}_n^m(\widehat{\bm{x}})&:= Y_n^m(\widehat{\bm{x}})\  \widehat{\bm{x}}, 
\end{align*}
where $ \widehat{\bm{x}} :=\bm{x}/|\bm{x}|\in\mathbb{S}^2$ and $\delta_n:=n(n+1), n=0,1,2\ldots,m=-n\ldots n.$ Denote by $ \nabla_{ \mathbb{S}^2} $ the surface gradient on $ \mathbb{S}^2. $  Set $M_n^m(\widehat{\bm{x}}):=(\bm{V}_n^m(\widehat{\bm{x}}),\bm{U}_n^m(\widehat{\bm{x}}),\bm{W}_n^m(\widehat{\bm{x}}))$. We can easily show that $\{M_n^m(\theta,\varphi):n=0,1,2\ldots,m=-n.\ldots n\}$ form a complete orthonormal system in $ [L^2(\mathbb{S}^2)]^3.$

The radiating solution $\bm{u}$ to the Navier equation (\ref{Navier-equation}) in 
$|\bm{x}|\geq R$ can be expressed as (see \cite[Section 2.4]{B-H-S-Y2018})
 
\begin{equation}\label{u-expansion}
\begin{aligned}
\bm{u}(\bm{x})=\sum_{n=0}^{\infty}\sum_{m=-n}^{n}M_n^m(\widehat{\bm{x}})A_n(|\bm{x}|){\bm{\alpha}_n^m}  
\end{aligned}
\end{equation}
where $\bm{\alpha}_n^m \in \mathbb{C}^3$ and $A_n(|\bm{x}|)$ is defined as 
\begin{equation}\label{coefficient-matrix}
\begin{aligned}
A_n(|\bm{x}|):=\begin{pmatrix}
h_n^{(1)}(k_s|\bm{x}|)&0&0\\
0&-\left( h_n^{(1)}(k_s|\bm{x}|)+k_s|\bm{x}|h_n^{(1)'}(k_s|\bm{x}|)\right)/|\bm{x}| & \sqrt{\delta_n}h_n^{(1)}(k_p|\bm{x}|)/|\bm{x}|\\
0&-\sqrt{\delta_n}h_n^{(1)}(k_s|\bm{x}|)/|\bm{x}|&k_p h_n^{(1)'}(k_p|\bm{x}|)
\end{pmatrix}
\end{aligned}.
\end{equation}

Here, $h_n^{(1)}$ and $h_n^{(1)'}$ are the spherical Hankel functions of order $n$ and its first derivative, respectively.

\begin{definition}
Let $\bm{u}$ be an outgoing solution to the problem  (\ref{Navier-equation})-(\ref{radiation}). Suppose $\bm{u}$ admits the expansion (\ref{u-expansion}) for all $|\bm{x}|\geq R$. Then the OtI mapping $\mathcal{T}: \text{Range}(G) \rightarrow [L^2(S_R)]^3 $ is defined as
\begin{equation}
\mathcal{T}(\bm{u}|_{S_R})=\tilde{\bm{u}}|_{S_R}
\end{equation}
with
\begin{equation}\label{u-expansion1}
\begin{aligned}
\tilde{\bm{u}}(\bm{x}):=-\sum_{n=0}^{\infty}\sum_{m=-n}^{n}M_n^m(\widehat{\bm{x}}) \overline{A_n(|\bm{x}|)}{\bm{\alpha}_n^m} ,\qquad |\bm{x}|\geq R.
\end{aligned}
\end{equation}
Here $A_n(|\bm{x}|)$ is defined in (\ref{coefficient-matrix}).
\end{definition}
By definition, the operator $\mathcal{T}$ is linear, bounded and one-to-one. As $\bm{u}$ is an outgoing wave satisfying the Kupradze radiation condition (\ref{radiation}), $\tilde{\bm{u}}$ is an incoming wave satisfying the Navier equation (\ref{Navier-equation}) and incoming condition
\begin{align*}
 \lim_{r=|\bm{x}|\rightarrow \infty }r\left[\frac{\partial \tilde{\bm{u}}_\alpha(\bm{x})}{\partial r}+i k_\alpha\tilde{\bm{u}}_\alpha(\bm{x})\right]=0,\qquad \alpha=p,s.
\end{align*} 

\begin{theorem}
The OtI mapping $\mathcal{T}: \text{Range}(G) \rightarrow [L^2(S_R)]^3 $ takes the form
\begin{equation}\label{OtI}
(\mathcal{T}\bm{\varphi})(\bm{x})=-\frac{1}{R^2}\sum_{n=0}^{\infty}\sum_{m=-n}^{n}M_n^m(\widehat{\bm{x}})\overline{  A_n(R)} \left[  A_n(R)\right]^{-1}\int_{S_R}  \overline{M_n^m(\widehat{\bm{y}})}^\top \bm{\varphi}(\bm{y}){\rm{d}}s(\bm{y}).
\end{equation}
\end{theorem}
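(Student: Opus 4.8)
The plan is to reduce everything to the expansion (\ref{u-expansion}) and to recover the unknown coefficient vectors $\bm{\alpha}_n^m$ from the given trace $\bm{\varphi} = \bm{u}|_{S_R}$ by exploiting the orthonormality of the matrix-valued system $\{M_n^m\}$ in $[L^2(\mathbb{S}^2)]^3$. Since $\bm{\varphi}$ lies in $\text{Range}(G)$, it is the restriction to $S_R$ of a radiating solution $\bm{u}$, which by (\ref{u-expansion}) satisfies $\bm{\varphi}(\bm{y}) = \sum_{n,m} M_n^m(\widehat{\bm{y}}) A_n(R)\bm{\alpha}_n^m$ for $\bm{y}\in S_R$. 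The definition (\ref{u-expansion1}) then gives $(\mathcal{T}\bm{\varphi})(\bm{x}) = -\sum_{n,m} M_n^m(\widehat{\bm{x}})\,\overline{A_n(R)}\,\bm{\alpha}_n^m$ on $S_R$, so the whole task reduces to expressing each $\bm{\alpha}_n^m$ as an integral against $\bm{\varphi}$.

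For the coefficient extraction I would first record the orthonormality relation in matrix form, namely $\int_{\mathbb{S}^2}\overline{M_n^m(\widehat{\bm{y}})}^\top M_{n'}^{m'}(\widehat{\bm{y}})\,ds(\widehat{\bm{y}}) = \delta_{nn'}\delta_{mm'}\mathbf{I}$, which is an immediate consequence of the stated orthonormality of the columns $\bm{V}_n^m,\bm{U}_n^m,\bm{W}_n^m$ together with their mutual orthogonality. Multiplying the expansion of $\bm{\varphi}$ on the left by $\overline{M_n^m(\widehat{\bm{y}})}^\top$, integrating over $S_R$, and using $ds(\bm{y}) = R^2\,ds(\widehat{\bm{y}})$ together with this relation collapses the double sum to a single term and yields $\int_{S_R}\overline{M_n^m(\widehat{\bm{y}})}^\top\bm{\varphi}(\bm{y})\,ds(\bm{y}) = R^2 A_n(R)\bm{\alpha}_n^m$. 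Inverting $A_n(R)$ gives $\bm{\alpha}_n^m = R^{-2}[A_n(R)]^{-1}\int_{S_R}\overline{M_n^m(\widehat{\bm{y}})}^\top\bm{\varphi}(\bm{y})\,ds(\bm{y})$, and substituting this into the expression for $(\mathcal{T}\bm{\varphi})(\bm{x})$ above reproduces (\ref{OtI}) exactly.

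The main obstacle I anticipate is justifying that $A_n(R)$ in (\ref{coefficient-matrix}) is invertible for every $n$, since the formula is meaningless otherwise. This amounts to checking that its determinant --- essentially $h_n^{(1)}(k_sR)$ times the determinant of the lower-right $2\times 2$ block built from $h_n^{(1)}(k_pR)$, $h_n^{(1)}(k_sR)$ and their derivatives --- never vanishes; this should follow from the nonvanishing of the relevant combinations of spherical Hankel functions (their Wronskian-type identities), reflecting the fact that a nontrivial radiating field cannot have zero trace on $S_R$. Indeed, invertibility of $A_n(R)$ is exactly what makes the trace-to-coefficient map well defined and is consistent with the injectivity of $\mathcal{T}$ already noted after (\ref{u-expansion1}). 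A secondary, more routine point is the interchange of the infinite summation with the surface integral: since the expansion (\ref{u-expansion}) converges in $[L^2(S_R)]^3$ and each $\overline{M_n^m}^\top$ is bounded, term-by-term integration is justified and the resulting series for $\mathcal{T}\bm{\varphi}$ converges in $[L^2(S_R)]^3$, in agreement with the boundedness of $\mathcal{T}$.
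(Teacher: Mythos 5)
Your proposal is correct and follows essentially the same route as the paper's proof: expand $\bm{\varphi}\in\text{Range}(G)$ on $S_R$ via (\ref{u-expansion}), use the orthonormality of $\{M_n^m\}$ in $[L^2(\mathbb{S}^2)]^3$ (with ${\rm d}s(\bm{y})=R^2\,{\rm d}s(\widehat{\bm{y}})$) to obtain $\int_{S_R}\overline{M_n^m(\widehat{\bm{y}})}^\top\bm{\varphi}(\bm{y})\,{\rm d}s(\bm{y})=R^2A_n(R)\bm{\varphi}_n^m$, invert $A_n(R)$, and substitute into the definition of $\mathcal{T}$. The only step you leave as a sketch, the invertibility of $A_n(R)$, is handled in the paper not by a direct Wronskian computation but by writing $C_n(R)=A_n(R)Q_n(R)$ with $Q_n(R)$ an explicit diagonal matrix (invertible since $h_n^{(1)}(kR)\neq 0$ for $k=k_p,k_s$) and invoking Lemma 2.15 of \cite{B-H-S-Y2018} for the invertibility of $C_n(R)$; your uniqueness-based heuristic points at the same underlying fact.
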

\begin{proof}
For each $\bm{\varphi} \in \text{Range}(G)$, we have the expansion
\begin{equation}\label{fi-expansion}
\begin{aligned}
\bm{\varphi}(\bm{x})=\sum_{n=0}^{\infty}\sum_{m=-n}^{n}  M_n^m(\widehat{\bm{x}})A_n(R){\bm{\varphi}_n^m}
\end{aligned},\qquad |\bm{x}|= R,
\end{equation}
where $\bm{\varphi}_n^m\in \mathbb{C}^3$ and $A_n(R)$ is defined in (\ref{coefficient-matrix}). Clearly, 
\begin{align*}
 A_n(R){\bm{\varphi}_n^m} =\frac{1}{R^2}\int_{S_R}    \overline{M_n^m(\widehat{\bm{y}})}^\top\bm{\varphi}(\bm{y}){\rm{d}}s(\bm{y}). 
\end{align*}
Introduce the matrix $C_n(R):=  A_n(R) Q_n(R)$, where the matrix $Q_n(R)$ is defined as 
\begin{equation*}
\begin{aligned}
Q_n(R):=\begin{pmatrix}
R/h_n^1( k_sR)&0&0\\
0&R/h_n^1( k_sR) &0 \\
0&0&R/h_n^1( k_pR) 
\end{pmatrix}.
\end{aligned}
\end{equation*}
Lemma 2.15 in \cite{B-H-S-Y2018} shows that the matrix $C_n(R)$ is invertible for all $n\geq 0, R>0$. Since $h_n^{(1)}(kR)\neq0$ for all $n>0, k=k_p,k_s,$   the matrix $Q_n(R)$ is invertible. Then $   A_n(R)  $ is invertible which means that
\begin{align}\label{coefficient}
{\bm{\varphi}_n^m}=\frac{1}{R^2}\left[  A_n(R)  \right]^{-1}\int_{S_R} \overline{M_n^m(\widehat{\bm{y}})}^\top \bm{\varphi}(\bm{y}){\rm{d}}s(\bm{y}).
\end{align}
By the definition of $\mathcal{T}$, it follows that
\begin{equation} 
\nonumber
(\mathcal{T}\bm{\varphi})(\bm{x})=-\frac{1}{R^2}\sum_{n=0}^{\infty}\sum_{m=-n}^{n}M_n^m(\widehat{\bm{x}})\overline{  A_n(R)} \left[  A_n(R)\right]^{-1}\int_{S_R}  \overline{M_n^m(\widehat{\bm{y}})}^\top \bm{\varphi}(\bm{y}){\rm{d}}s(\bm{y}).
\end{equation} 
The proof is complete.
\end{proof}

Next, we consider the expansion of the fundamental solution $\Pi(\bm{x},\bm{y})$. Recall (see \cite[Theorem 2.11, 6.29]{Colton2013}) that $ \Phi_{k}(\bm{x},\bm{y}) $ and $\Phi_{k}(\bm{x},\bm{y})\mathbf{I}$ have the expansions
\begin{equation}\label{f1-expansion}
 \Phi_{k}(\bm{x},\bm{y})=ik\sum_{n=0}^{\infty}\sum_{m=-n}^{n} h_n^{(1)}(k|\bm{y}|)Y_n^m(\widehat{\bm{y}})j_n(k|\bm{x}|)\overline{Y_n^m(\widehat{\bm{x}})} 
 \end{equation} 
and 
\begin{equation}\label{f2-expansion}
\begin{aligned}
\Phi_{k}(\bm{x},\bm{y})\mathbf{I}&=ik\sum_{n=1}^{\infty}\sum_{m=-n}^{n}j_n(k|\bm{x}|)\overline{\bm{V}_n^m(\widehat{\bm{x}})}h_n^{(1)}(k|\bm{y}|)\bm{V}_n^m(\widehat{\bm{y}})^{\top}\\
&+ ik^ {-1}\sum_{n=1}^{\infty}\sum_{m=-n}^{n}\nabla\times\left(j_n(k|\bm{x}|)\overline{\bm{V}_n^m(\widehat{\bm{x}})}\right)\nabla\times\left(h_n^{(1)}(k|\bm{y}|)\bm{V}_n^m(\widehat{\bm{y}})\right)^{\top}\\
&+ik^ {-1}\sum_{n=0}^{\infty}\sum_{m=-n}^{n}\nabla \left(j_n(k|\bm{x}|)\overline{Y_n^m(\widehat{\bm{x}})}\right)\nabla \left(h_n^{(1)}(k|\bm{y}|)Y_n^m(\widehat{\bm{y}})\right)^{\top},
\end{aligned}
\end{equation}
which converge absolutely and uniformly on compact subsets of $|\bm{x}|<|\bm{y}|.$ Here $j_n$ denotes the spherical Bessel function of order $n$.

From the relation $\nabla_{\bm{x}}\nabla_{\bm{x}}\Phi_{k}(\bm{x},\bm{y})=-\nabla_{\bm{\mathbf{y}}}\nabla_{\bm{x}}\Phi_{k}(\bm{x},\bm{y})$ and $Y_n^m=\overline{Y_n^{-m}}$, we have
\begin{equation}\label{f3-expansion}
\begin{aligned}
\nabla_{\bm{x}}\nabla_{\bm{x}}^\top\Phi_{k}(\bm{x},\bm{y})&=-ik\sum_{n=0}^{\infty}\sum_{m=-n}^{n} \nabla \left(j_n(k|\bm{x}|)\overline{Y_n^m(\widehat{\bm{x}})}\right)\nabla \left(h_n^{(1)}(k|\bm{y}|){Y_n^m(\widehat{\bm{y}})}\right)^{\top}\\
&=-ik\sum_{n=0}^{\infty}\sum_{m=-n}^{n} \nabla \left(j_n(k|\bm{x}|) Y_n^m(\widehat{\bm{x}}) \right)\nabla \left(h_n^{(1)}(k|\bm{y}|){\overline{Y_n^m(\widehat{\bm{y}})}}\right)^{\top}
\end{aligned}
 \end{equation} 
Combining equations (\ref{f1-expansion})-(\ref{f3-expansion}), we obtain the expansion of $\Pi$ as follows
\begin{equation}\label{pi-expansion}
\begin{aligned}
 \Pi(\bm{x},\bm{y})&=i\sum_{n=0}^{\infty}\sum_{m=-n}^{n} \overline{ M_n^m(\widehat{\bm{x}})} B_n(|\bm{x}|)\ E\ [ A_n(|\bm{y}|)]^{\top}  M_n^m(\widehat{\bm{y}}) ^\top\\
&=i\sum_{n=0}^{\infty}\sum_{m=-n}^{n}  M_n^m(\widehat{\bm{x}}) B_n(|\bm{x}|)\ E\  [ A_n(|\bm{y}|)]^{\top} \overline{ M_n^m(\widehat{\bm{y}})} ^\top ,
\end{aligned}
 \end{equation} 
which converge absolutely and uniformly on compact subsets of $|\bm{x}|<|\bm{y}|.$ Here, $ B_n(|\bm{x}|)$ and $E$ are defined as 
\begin{equation}\label{jn-matrix}
\begin{aligned}
B_n(|\bm{x}|):=\begin{pmatrix}
j_n (k_s|\bm{x}|)&0&0\\
0&-\left(j_n(k_s|\bm{x}|)+k_s|\bm{x}|j_n'(k_s|\bm{x}|)\right)/|\bm{x}| &k_p j_n'(k_p|\bm{x}|)\\
0&-\sqrt{\delta_n}j_n(k_s|\bm{x}|)/|\bm{x}|&\sqrt{\delta_n}j_n(k_p|\bm{x}|)/|\bm{x}| 
\end{pmatrix}
\end{aligned}
\end{equation}
and
\begin{equation} 
\begin{aligned}
E:=\begin{pmatrix}
 k_s/\mu&0&0\\
0&1/(k_s\mu) &0 \\
0&0&1/(k_p(\lambda+2\mu)) 
\end{pmatrix}.
\end{aligned}
\end{equation}

\begin{lemma}\label{T-character}
$\mathcal{T}\left(\Pi(\cdot,\bm{z}) \bm{a}|_{S_R}\right)=\overline{\Pi(\cdot,\bm{z})} \bm{a}|_{S_R}$ for $|\bm{z}|<R$ and any constant $\bm{a}\in \mathbb{S}^2$.
\end{lemma}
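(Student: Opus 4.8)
The plan is to read off the modal coefficients of the radiating field $\bm{x}\mapsto\Pi(\bm{x},\bm{z})\bm{a}$ on $\{|\bm{x}|\ge R\}$, apply the defining series (\ref{u-expansion1}) of $\mathcal{T}$, and then compare the outcome with the series for $\overline{\Pi(\cdot,\bm{z})}\bm{a}$ term by term.

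First I would produce the \emph{exterior} expansion of $\Pi(\bm{x},\bm{z})$ valid for $|\bm{x}|>|\bm{z}|$. The expansion (\ref{pi-expansion}) is stated only for $|\bm{x}|<|\bm{y}|$, so it cannot be used directly with $\bm{y}=\bm{z}$ (here $|\bm{x}|=R>|\bm{z}|$). Instead I would invoke the symmetry of the Green's tensor, $\Pi(\bm{x},\bm{z})=\Pi(\bm{z},\bm{x})^\top=\Pi(\bm{z},\bm{x})$ (which follows from $\Phi_k(\bm{x},\bm{z})=\Phi_k(\bm{z},\bm{x})$ and $\nabla_{\bm{x}}\nabla_{\bm{x}}^\top\Phi_k=\nabla_{\bm{z}}\nabla_{\bm{z}}^\top\Phi_k$), apply (\ref{pi-expansion}) to $\Pi(\bm{z},\bm{x})$ with the near variable $\bm{z}$ (legitimate since $|\bm{z}|<|\bm{x}|$), and transpose. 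After using the conjugation relation $\overline{M_n^m}=M_n^{-m}$ (a consequence of $\overline{Y_n^m}=Y_n^{-m}$ together with the definitions of $\bm{U}_n^m,\bm{V}_n^m,\bm{W}_n^m$) and relabelling $m\mapsto-m$, this should yield
\[
\Pi(\bm{x},\bm{z})=i\sum_{n,m}M_n^m(\widehat{\bm{x}})\,A_n(|\bm{x}|)\,E\,[B_n(|\bm{z}|)]^\top\,\overline{M_n^m(\widehat{\bm{z}})}^\top,\qquad|\bm{x}|>|\bm{z}|,
\]
so that $\Pi(\bm{x},\bm{z})\bm{a}=\sum_{n,m}M_n^m(\widehat{\bm{x}})A_n(|\bm{x}|)\bm{\alpha}_n^m$ with $\bm{\alpha}_n^m:=iE[B_n(|\bm{z}|)]^\top\overline{M_n^m(\widehat{\bm{z}})}^\top\bm{a}$. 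This identifies $\Pi(\cdot,\bm{z})\bm{a}$ as an outgoing Navier solution of the form (\ref{u-expansion}) to which $\mathcal{T}$ applies termwise.

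Next I would apply (\ref{u-expansion1}): replacing $A_n(|\bm{x}|)$ by $-\overline{A_n(|\bm{x}|)}$ gives $\mathcal{T}(\Pi(\cdot,\bm{z})\bm{a})(\bm{x})=-\sum_{n,m}M_n^m(\widehat{\bm{x}})\overline{A_n(|\bm{x}|)}\,\bm{\alpha}_n^m$. Separately, conjugating the exterior expansion and using that $E$, the matrix $B_n$ (built from the real Bessel functions $j_n$), and the real vector $\bm{a}\in\mathbb{S}^2$ are all real, together with $\overline{M_n^m}=M_n^{-m}$, gives $\overline{\bm{\alpha}_n^m}=-\bm{\alpha}_n^{-m}$ and hence $\overline{\Pi(\bm{x},\bm{z})}\bm{a}=\sum_{n,m}M_n^{-m}(\widehat{\bm{x}})\overline{A_n(|\bm{x}|)}(-\bm{\alpha}_n^{-m})$; the relabelling $m\mapsto-m$ turns this into exactly the series for $\mathcal{T}(\Pi(\cdot,\bm{z})\bm{a})$. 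Equality then holds for all $|\bm{x}|\ge R$, and in particular on $S_R$.

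I expect the only genuine obstacle to be the bookkeeping in the first step: getting the transposes in the addition theorem right when passing from $\Pi(\bm{z},\bm{x})$ to $\Pi(\bm{x},\bm{z})$, and tracking the interplay between complex conjugation, the index reflection $m\mapsto-m$, and the reality of $E$, $B_n$, $\bm{a}$. Once the exterior coefficients $\bm{\alpha}_n^m$ are correctly in hand, the remaining comparison is a one-line conjugation-and-relabel argument. I would also remark that, although $\mathcal{T}$ was defined on $\mathrm{Range}(G)$, its defining series (\ref{u-expansion1}) makes sense for any outgoing Navier solution admitting the expansion (\ref{u-expansion}), which is all that is used here.
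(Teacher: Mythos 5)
Your proposal is correct and follows essentially the same route as the paper: both proofs first obtain the exterior expansion of $\Pi(\cdot,\bm{z})\bm{a}$ for $|\bm{x}|>|\bm{z}|$ from the symmetry of the Green's tensor and the expansion (\ref{pi-expansion}), then apply $\mathcal{T}$ (the paper via the kernel form (\ref{OtI}) plus orthonormality of the $M_n^m$, you via the defining series (\ref{u-expansion1}) directly — the same computation, organized differently), and finally compare with the conjugated expansion using the reality of $E$, $B_n$ and $\bm{a}$. Your closing remark about extending $\mathcal{T}$ beyond $\mathrm{Range}(G)$ to any outgoing solution admitting the expansion (\ref{u-expansion}) is a point the paper uses implicitly without comment.
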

\begin{proof}
By virtue of $\Pi(\bm{x},\bm{z})=[\Pi(\bm{x},\bm{z})]^\top$ and (\ref{pi-expansion}), we have for $|\bm{x}|=R$ that
\begin{equation} \label{expansion-z}
\begin{aligned}
 \Pi(\bm{x},\bm{z})&=i\sum_{n=0}^{\infty}\sum_{m=-n}^{n}  M_n^m(\widehat{\bm{x}})   A_n(R)   E^\top\left[ B_n(|\bm{z}|) \right]^\top    \overline{M_n^m(\widehat{\bm{z}})} ^\top\\
 &=i\sum_{n=0}^{\infty}\sum_{m=-n}^{n} \overline{ M_n^m(\widehat{\bm{x}})}   A_n(R)   E^\top\left[ B_n(|\bm{z}|) \right]^\top    M_n^m(\widehat{\bm{z}}) ^\top.
 \end{aligned}
\end{equation} 
Thus by the definition (\ref{OtI}) of $\mathcal{T}$, we have
\begin{equation}\label{expansion-z1}
\begin{aligned}
\mathcal{T}\left(\Pi(\cdot,\bm{z})  \bm{a}|_{S_R}\right)=
-\frac{1}{R^2}\sum_{n=0}^{\infty}\sum_{m=-n}^{n}  M_n^m(\widehat{\bm{x}}) \overline{  A_n(R) } \left[  A_n(R)  \right]^{-1} \int_{S_R}   \overline{M_n^m(\widehat{\bm{y}})}^{\top}\Pi(\bm{y},\bm{z})  \bm{a}{\rm{d}}s(\bm{y}).
\end{aligned}
\end{equation}
Using the expansion of $  \Pi(\bm{x},\bm{z}) $ in (\ref{expansion-z}), we have
\begin{equation} \label{expansion-z2}
\begin{aligned}
 &\int_{S_R}   \overline{M_n^m(\widehat{\bm{y}})}^{\top}\Pi(\bm{y},\bm{z})  \bm{a}{\rm{d}}s(\bm{y})\\
 =& i\sum_{l=0}^{\infty}\sum_{q=-l}^{l} \int_{S_R}   \overline{M_n^m(\widehat{\bm{y}})}^{\top}  M_l^q(\widehat{\bm{y}})    A_l(R)  E^\top\left[ B_l(|\bm{z}|) \right]^\top    \overline{M_l^q(\widehat{\bm{z}})} ^\top \bm{a}{\rm{d}}s(\bm{y})\\
 =& i\sum_{l=0}^{\infty}\sum_{q=-l}^{l} \int_{S_R}   \overline{M_n^m(\widehat{\bm{y}})}^{\top}  M_l^q(\widehat{\bm{y}}) {\rm{d}}s(\bm{y}) A_l(R)  E^\top\left[ B_l(|\bm{z}|) \right]^\top    \overline{M_l^q(\widehat{\bm{z}})}  ^\top\bm{a},
\end{aligned}
\end{equation}
where $l,q\in  \mathbb{Z}$. Since $\{M_n^m(\theta,\varphi):n=0,1,2\ldots,m=-n.\ldots n\}$ form a complete orthonormal system in $ [L^2(\mathbb{S}^2)]^3,$ we know
that
\begin{equation}\label{expansion-z3}
 \int_{S_R}\overline{M_n^m(\widehat{\bm{y}})}^{\top}  M_l^q(\widehat{\bm{y}}){\rm{d}}s(\bm{y}) =\begin{cases}
 R^2\mathbf{I},\qquad l=n,\ q=m,\\
 \bm{0},\qquad \qquad \text{else}.
 \end{cases}
\end{equation}
By substituting (\ref{expansion-z3}) into (\ref{expansion-z2}), we obtain
\begin{equation} \label{expansion-z4}
\begin{aligned}
  \int_{S_R}   \overline{M_n^m(\widehat{\bm{y}})}^{\top} \Pi(\bm{y},\bm{z})  \bm{a}{\rm{d}}s(\bm{y})=iR^2  A_n(R)   E^\top\left[ B_n(|\bm{z}|) \right]^\top   \overline{M_n^m(\widehat{\bm{z}})}^{\top} \bm{a}.
\end{aligned}
\end{equation}
Inserting (\ref{expansion-z4}) into (\ref{expansion-z1}) gives
\begin{equation}\label{expansion-z5}
\begin{aligned}
\mathcal{T}\left(\Pi(\cdot,\bm{z})  \bm{a}|_{S_R}\right)=
-i\sum_{n=0}^{\infty}\sum_{m=-n}^{n}   M_n^m(\widehat{\bm{x}})   \overline{  A_n(R) }   E^\top\left[ B_n(|\bm{z}|) \right]^\top   \overline{M_n^m(\widehat{\bm{z}})}^\top\bm{a}.
\end{aligned}
\end{equation}

As matrixes $E$ and $B_n$ are real-valued, it's easy to find that
$\mathcal{T}\left(\Pi(\cdot,\bm{z})  \bm{a} \right)=\overline{\Pi(\cdot,\bm{z})} \bm{a}$.
\end{proof}

\begin{lemma}
The adjoint operator $\mathcal{T}^*: [L^2(S_R)]^3 \rightarrow [L^2(S_R)]^3  $ is
\begin{equation}\label{OtI*}
(\mathcal{T}^*\bm{\varphi})(\bm{x}):=-\frac{1}{R^2}\sum_{n=0}^{\infty}\sum_{m=-n}^{n}   M_n^m(\widehat{\bm{x}})\left[\overline{  A_n(R) }  ^{-1}\right]^\top \left[  A_n(R) \right]^\top  \int_{S_R}   \overline{ M_n^m(\widehat{\bm{y}})}^\top\bm{\varphi}(\bm{y}){\rm{d}}s(\bm{y}).
\end{equation}
Moreover, $\mathcal{T}\mathcal{T}^*=\mathcal{T}^*\mathcal{T}=I$. 
\end{lemma}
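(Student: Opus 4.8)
The plan is to diagonalize $\mathcal{T}$ with respect to the complete orthonormal system $\{M_n^m\}$ and to read off everything at the level of the $3\times3$ symbol matrices. First I would fix an arbitrary $\bm{\varphi}\in[L^2(S_R)]^3$ and expand it as $\bm{\varphi}=\sum_{n,m}M_n^m(\widehat{\bm{x}})\bm{\varphi}_n^m$ with $\bm{\varphi}_n^m=R^{-2}\int_{S_R}\overline{M_n^m(\widehat{\bm{y}})}^\top\bm{\varphi}\,{\rm d}s$, which is legitimate since the $M_n^m$ form a complete orthonormal system and satisfy (\ref{expansion-z3}). Comparing with (\ref{OtI}) shows that $\mathcal{T}$ acts on the coefficient sequence by the block-diagonal multiplier $T_n:=-\overline{A_n(R)}\,[A_n(R)]^{-1}$, i.e. $(\mathcal{T}\bm{\varphi})_n^m=T_n\bm{\varphi}_n^m$, where $[A_n(R)]^{-1}$ exists by the invertibility argument already given after (\ref{fi-expansion}).

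Next I would compute the adjoint purely at the symbol level. The orthonormality (\ref{expansion-z3}) gives the Parseval identity $(\bm{\varphi},\bm{\psi})_{[L^2(S_R)]^3}=R^2\sum_{n,m}\overline{\bm{\psi}_n^m}^\top\bm{\varphi}_n^m$, whence $(\mathcal{T}\bm{\varphi},\bm{\psi})_{[L^2(S_R)]^3}=R^2\sum_{n,m}\overline{\bm{\psi}_n^m}^\top T_n\bm{\varphi}_n^m$. Moving $T_n$ onto the second slot forces the adjoint symbol to be the conjugate transpose $T_n^*=\overline{T_n}^\top=-\big(\overline{A_n(R)}^\top\big)^{-1}[A_n(R)]^\top$. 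Using that transposition and inversion commute, $\big(\overline{A_n(R)}^\top\big)^{-1}=[\,\overline{A_n(R)}^{-1}]^\top$, and rewriting $(\mathcal{T}^*\bm{\psi})_n^m=T_n^*\bm{\psi}_n^m$ back in physical space reproduces exactly (\ref{OtI*}). This settles the first assertion.

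For the \emph{Moreover} part I would again argue mode by mode: since $\mathcal{T}$ and $\mathcal{T}^*$ are simultaneously block-diagonalized by $\{M_n^m\}$, the compositions $\mathcal{T}\mathcal{T}^*$ and $\mathcal{T}^*\mathcal{T}$ act through the matrix products $T_nT_n^*$ and $T_n^*T_n$, and both equal the identity on $[L^2(S_R)]^3$ if and only if every $3\times3$ symbol $T_n$ is unitary. Writing out $T_nT_n^*=\overline{A_n(R)}\,[A_n(R)]^{-1}\big(\overline{A_n(R)}^\top\big)^{-1}[A_n(R)]^\top$, one sees that $T_nT_n^*=T_n^*T_n=I$ is equivalent to the single pointwise identity $[A_n(R)]^\top\overline{A_n(R)}=\overline{A_n(R)}^\top A_n(R)$, i.e. to the statement that the (automatically Hermitian) matrix $[A_n(R)]^\top\overline{A_n(R)}$ is in fact real symmetric. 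Because $A_n(R)$ is block-diagonal, the $1\times1$ block contributes $|h_n^{(1)}(k_sR)|^2$, which is automatically real, so the whole question collapses to the off-diagonal entry of the $2\times2$ block.

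The main obstacle is exactly this $2\times2$ reality identity, and it is where I would expect the genuine work to lie rather than in the essentially bookkeeping adjoint computation. The coupling entry combines $h_n^{(1)}(k_sR)$, $h_n^{(1)'}(k_sR)$ with the conjugates $\overline{h_n^{(1)}(k_pR)}=h_n^{(2)}(k_pR)$ and $\overline{h_n^{(1)'}(k_pR)}=h_n^{(2)'}(k_pR)$, so it mixes the compressional and shear arguments $k_pR\neq k_sR$; its imaginary part therefore cannot be removed by the ordinary equal-argument Wronskian $W\!\big(h_n^{(1)},h_n^{(2)}\big)=-2i/t^2$. I would consequently look for an additional structural relation on $A_n(R)$ forcing this mixed-argument combination to be real, for instance one extracted from the invertible matrix $C_n(R)=A_n(R)Q_n(R)$ of Lemma~2.15 in \cite{B-H-S-Y2018}, and I would verify it first on a low-order mode such as $n=1$ before committing to the general argument, since that off-diagonal term is precisely what controls whether $T_n$ is unitary.
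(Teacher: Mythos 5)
Your proof of the first assertion is correct. Expanding in the orthonormal system $\{M_n^m\}$, reading off the symbol $T_n=-\overline{A_n(R)}\,[A_n(R)]^{-1}$ of $\mathcal{T}$ from (\ref{OtI}), and passing to the conjugate transpose $T_n^*=-\big[\overline{A_n(R)}^{\,-1}\big]^\top[A_n(R)]^\top$ via the Parseval identity is exactly the right computation and reproduces (\ref{OtI*}). Be aware that the paper states this lemma with \emph{no proof at all}, so there is nothing to compare against; your symbol calculus is the argument the paper omits (strictly speaking, both you and the paper also take for granted that $\mathcal{T}$ is bounded, i.e. $\sup_n\Vert T_n\Vert<\infty$, which is what legitimizes speaking of an adjoint on all of $[L^2(S_R)]^3$).

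For the ``Moreover'' part your reduction is also correct --- since the subspaces spanned by the columns of $M_n^m$ are invariant, $\mathcal{T}\mathcal{T}^*=\mathcal{T}^*\mathcal{T}=I$ holds if and only if each $T_n$ is unitary, i.e. if and only if $[A_n(R)]^\top\overline{A_n(R)}$ is real --- and the suspicion you voice at the end is justified: the required mixed-argument identity is \emph{false}, so the obstacle you located cannot be overcome. Writing $s=k_sR$, $p=k_pR$, the off-diagonal entry of the $2\times 2$ block of $[A_n(R)]^\top\overline{A_n(R)}$ is a nonzero multiple of $\big(h_n^{(1)}(s)+s\,h_n^{(1)'}(s)\big)\overline{h_n^{(1)}(p)}+p\,h_n^{(1)}(s)\overline{h_n^{(1)'}(p)}$; taking $n=1$, $s=2\pi$, $p=\pi$ (i.e. $\lambda=2\mu$) and using $h_1^{(1)}(t)=-e^{it}\left(1/t+i/t^2\right)$ gives
\begin{equation*}
\mathrm{Im}\Big[\big(h_1^{(1)}(s)+s\,h_1^{(1)'}(s)\big)\overline{h_1^{(1)}(p)}+p\,h_1^{(1)}(s)\overline{h_1^{(1)'}(p)}\Big]
=-\frac{1}{2\pi}-\frac{3}{4\pi^3}\neq 0,
\end{equation*}
so $T_1$ is not unitary and $\mathcal{T}\mathcal{T}^*\neq I$. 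Since $k_p<k_s$ always (because $\lambda+\mu>0$), this is not a degenerate choice of parameters: only the decoupled $\bm{V}_n^m$ (toroidal) block, whose symbol $-\overline{h_n^{(1)}(k_sR)}/h_n^{(1)}(k_sR)$ is unimodular, behaves like the acoustic case of \cite{H-Y2014}; the coupled $\bm{U}_n^m$--$\bm{W}_n^m$ block destroys unitarity. What \emph{is} true at the symbol level is $T_n\overline{T_n}=\overline{T_n}T_n=I$, i.e. $\mathcal{T}$ is invertible with inverse of symbol $-A_n(R)\,\overline{A_n(R)}^{\,-1}$, and $\mathcal{T}^{-1}=\mathcal{T}^*$ would require precisely the reality condition that fails. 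So the correct conclusion of your analysis is not that you are missing a structural lemma, but that the second assertion of the statement is wrong as it stands; the paper's later uses of it (boundedness of $\mathcal{T}$ and density of the range of $\mathcal{G}=\mathcal{T}G$) would have to be re-justified through bounded invertibility of $\mathcal{T}$, i.e. uniform bounds on $\Vert T_n\Vert$ and $\Vert T_n^{-1}\Vert$, rather than through unitarity.
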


\subsection{Factorization of $\mathcal{T}N$}
In this subsection, we multiply the near-field operator $N$ with the OtI operator $\mathcal{T}$ and give a factorization of 
$\mathcal{T}N$. First, we introduce the single-layer integral operator $S$ and single-layer potential $V$.
\begin{equation*}
(S\bm{\psi})(\bm{x})=\int_{\partial D} \Pi(\bm{x},\bm{y})  \bm{\psi}(\bm{y}){\rm{d}}s(\bm{y}), \qquad \bm{x}\in  \partial D,
\end{equation*}
\begin{equation*}
(V\bm{\psi})(\bm{x})=\int_{\partial D} \Pi(\bm{x},\bm{y})  \bm{\psi}(\bm{y}){\rm{d}}s(\bm{y}), \qquad \bm{x}\in   \mathbb{R}^3.
\end{equation*}

Let the operators $H, H^*$ and $G$ be defined as in Section \ref{sec-solution}. The adjoint operator $H^*$ can be factorized in terms of $\mathcal{T}, G$ and $S$ as follows.
\begin{theorem}\label{h}
 $H^*=\mathcal{T}GS$.
\end{theorem}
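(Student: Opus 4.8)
The plan is to establish the identity $H^*\bm{\psi}=\mathcal{T}GS\bm{\psi}$ for every density $\bm{\psi}\in[H^{-\frac12}(\partial D)]^3$ by evaluating the right-hand side in two stages: first identifying $GS\bm{\psi}$ with an explicit radiating field on $S_R$, and then applying the OtI operator $\mathcal{T}$ term-by-term with the help of Lemma \ref{T-character}.

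First I would compute $GS\bm{\psi}$. Recall that the single-layer potential $V\bm{\psi}(\bm{x})=\int_{\partial D}\Pi(\bm{x},\bm{y})\bm{\psi}(\bm{y})\,\mathrm{d}s(\bm{y})$ satisfies the Navier equation (\ref{Navier-equation}) in $D^c$, obeys the Kupradze radiation condition (\ref{radiation}), and is continuous across $\partial D$ with boundary trace exactly $S\bm{\psi}$. Hence $V\bm{\psi}|_{D^c}$ is a radiating solution of the exterior Dirichlet problem with data $S\bm{\psi}$ on $\partial D$; by the uniqueness of this problem (already invoked in the proof of Theorem \ref{G-com}) and the definition (\ref{solution-operator}) of $G$, it follows that $GS\bm{\psi}=(V\bm{\psi})|_{S_R}$. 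In particular $(V\bm{\psi})|_{S_R}$ lies in $\mathrm{Range}(G)$, so $\mathcal{T}$ is indeed applicable to it.

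Next I would apply $\mathcal{T}$ to $(V\bm{\psi})|_{S_R}$. For each fixed $\bm{y}\in\partial D$ the field $\bm{x}\mapsto\Pi(\bm{x},\bm{y})\bm{\psi}(\bm{y})$ is an outgoing point-source field with the constant polarization $\bm{\psi}(\bm{y})$, and since $\overline{D}\subset B_R$ we have $|\bm{y}|<R$. Lemma \ref{T-character} then gives $\mathcal{T}\big(\Pi(\cdot,\bm{y})\bm{\psi}(\bm{y})|_{S_R}\big)=\overline{\Pi(\cdot,\bm{y})}\bm{\psi}(\bm{y})|_{S_R}$. Using the linearity and boundedness of $\mathcal{T}$ to interchange it with the boundary integral over $\partial D$, I obtain
\[
\mathcal{T}\big((V\bm{\psi})|_{S_R}\big)(\bm{x})=\int_{\partial D}\overline{\Pi(\bm{x},\bm{y})}\,\bm{\psi}(\bm{y})\,\mathrm{d}s(\bm{y})=(H^*\bm{\psi})(\bm{x}),\qquad \bm{x}\in S_R,
\]
which is precisely the claimed factorization.

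The main obstacle I anticipate is rigorously justifying the interchange of $\mathcal{T}$ with the integral $\int_{\partial D}\,\cdot\,\mathrm{d}s(\bm{y})$, since $\mathcal{T}$ is defined through an infinite series of spherical harmonics rather than a pointwise kernel. I would handle this by invoking the expansion (\ref{pi-expansion}) of $\Pi(\bm{x},\bm{y})$, which converges absolutely and uniformly for $|\bm{y}|\le\max_{\partial D}|\bm{y}|<R=|\bm{x}|$; this uniform convergence, together with the boundedness of $\mathcal{T}$ on $\mathrm{Range}(G)$, legitimizes commuting the operator and the integration and matching the series term-by-term via the orthogonality relation (\ref{expansion-z3}). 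Alternatively, one may expand $(V\bm{\psi})|_{S_R}$ directly in the basis $\{M_n^m(\widehat{\bm{x}})A_n(R)\}$ and apply the explicit formula (\ref{OtI}), reducing the whole verification to the algebraic identity already used in the proof of Lemma \ref{T-character}.
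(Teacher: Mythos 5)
Your proposal is correct, and it reaches the factorization by a route that is organized differently from the paper's. The paper's proof is a direct series computation: it expands $H\bm{g}$ via (\ref{pi-expansion}), takes the adjoint term-by-term (using that $j_n$, $B_n$, $E$ are real) to obtain an explicit spherical-harmonic series for $H^*$, then expands $V\bm{\psi}$ for $|\bm{x}|\geq R$, identifies $GS\bm{\psi}=(V\bm{\psi})|_{S_R}$, and finally applies the definition of $\mathcal{T}$ (replacing $A_n(R)$ by $-\overline{A_n(R)}$) to match the two series. You instead reuse Lemma \ref{T-character} as the key ingredient: after the same identification $GS\bm{\psi}=(V\bm{\psi})|_{S_R}$ (which you obtain by uniqueness of the exterior Dirichlet problem rather than by expansion), you push $\mathcal{T}$ through the boundary integral and invoke the point-source identity $\mathcal{T}\bigl(\Pi(\cdot,\bm{y})\bm{a}|_{S_R}\bigr)=\overline{\Pi(\cdot,\bm{y})}\bm{a}|_{S_R}$, then recognize the resulting kernel $\overline{\Pi(\bm{x},\bm{y})}$ as exactly the kernel of $H^*$ stated in Section \ref{sec-solution}. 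This is shorter and more conceptual — it makes clear that the factorization is an immediate consequence of the conjugation property of $\mathcal{T}$ on point sources — at the price of three technical justifications that you correctly flag or should flag: (i) commuting $\mathcal{T}$ with $\int_{\partial D}\cdot\,\mathrm{d}s(\bm{y})$, which works since the map $\bm{y}\mapsto\Pi(\cdot,\bm{y})\bm{\psi}(\bm{y})|_{S_R}$ is continuous into $[L^2(S_R)]^3$ (no singularity, as $|\bm{y}|<R$) and $\mathcal{T}$ is bounded, so the bounded operator passes through the Bochner integral; (ii) Lemma \ref{T-character} is stated only for real unit polarizations $\bm{a}\in\mathbb{S}^2$, so you need its (trivial, by complex linearity of $\mathcal{T}$) extension to arbitrary $\bm{a}\in\mathbb{C}^3$, since $\bm{\psi}(\bm{y})$ is complex; (iii) for $\bm{\psi}\in[H^{-\frac12}(\partial D)]^3$ the pointwise-polarization argument needs a density step — prove the identity for smooth $\bm{\psi}$ and extend by boundedness of both $H^*$ and $\mathcal{T}GS$. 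None of these is a genuine gap (the paper is no more careful on the corresponding points in its series manipulations), and your fallback — expanding $(V\bm{\psi})|_{S_R}$ in the basis $M_n^m(\widehat{\bm{x}})A_n(R)$ and applying (\ref{OtI}) directly — is precisely the paper's argument.
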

\begin{proof}
Using the expansion of $\Pi$ in (\ref{pi-expansion}), we obtain an expression of $ \bm{v}_{\bm{g}}^{in} $ as follows:
\begin{equation}
\bm{v}_{\bm{g}}^{in}(\bm{x})=i\sum_{n=0}^{\infty}\sum_{m=-n}^{n}  M_n^m(\widehat{\bm{x}}) B_n(|\bm{x}|)\ E\  [ A_n(R)]^{\top} \int_{S_R}\overline{ M_n^m(\widehat{\bm{y}})} ^\top{\bm{g}}(\bm{y}){\rm{d}}s(\bm{y}),\qquad |\bm{x}|<R,
\end{equation}
which together with the definition of $H$ implies 
\begin{equation} 
(H{\bm{g}})(\bm{x})=i\sum_{n=0}^{\infty}\sum_{m=-n}^{n}  M_n^m(\widehat{\bm{x}}) B_n(|\bm{x}|)\ E\  [ A_n(R)]^{\top} \int_{S_R}\overline{ M_n^m(\widehat{\bm{y}})} ^\top{\bm{g}}(\bm{y}){\rm{d}}s(\bm{y}), \qquad \bm{x}\in  \partial D.
\end{equation}
Since $j_n$ is real-valued, the adjoint operator $H^*$ takes the form 
\begin{equation} 
(H^*\bm{\psi})(\bm{x})=-i\sum_{n=0}^{\infty}\sum_{m=-n}^{n}   M_n^m(\widehat{\bm{x}})   \overline{A_n(R) }   \int_{\partial D}  E^\top\left[ B_n(|\bm{y}|) \right]^\top  \overline{M_n^m(\widehat{\bm{y}})}^\top\bm{\psi}(\bm{y}){\rm{d}}s(\bm{y}), \qquad \bm{x}\in  S_R,
\end{equation}
for $\bm{\psi}\in [H^{-\frac{1}{2}}(\partial D)]^3$. Obviously, $V\bm{\psi}$ is a radiating solution to the Navier equation (\ref{Navier-equation}). It follows from the expansion of $\Pi$ in (\ref{pi-expansion}) that for $ |\bm{x}|\geq R $
\begin{equation*}
(V\bm{\psi})(\bm{x})=i\sum_{n=0}^{\infty}\sum_{m=-n}^{n}   M_n^m(\widehat{\bm{x}})   A_n(|\bm{x}|)    \int_{\partial D}  E^\top\left[ B_n(|\bm{y}|) \right]^\top  \overline{M_n^m(\widehat{\bm{y}})}]^\top\bm{\psi}(\bm{y}){\rm{d}}s(\bm{y}) .
\end{equation*}
Since $(V\bm{\psi})(\bm{x})|_{\partial D}=(S\bm{\psi})(\bm{x})$, it's obvious that
\begin{equation*}
(GS\bm{\psi})(\bm{x})=i\sum_{n=0}^{\infty}\sum_{m=-n}^{n}   M_n^m(\widehat{\bm{x}})   A_n(R)    \int_{\partial D}  E^\top\left[ B_n(|\bm{y}|) \right]^\top  \overline{M_n^m(\widehat{\bm{y}})}]^\top\bm{\psi}(\bm{y}){\rm{d}}s(\bm{y}) .
\end{equation*}
Recalling the definition of $\mathcal{T}$, we can verify that $H^*=\mathcal{T}GS$. The proof is complete.
\end{proof} 

By Theorem \ref{h} and the definition of the above operators,  the near-field operator can be factorized as follows.

\begin{theorem}
(i) The near-field operator $N$ has the factorization 
\begin{equation}\label{fac-N}
\mathcal{T}N=-\mathcal{G}S^*\mathcal{G}^*,\qquad \mathcal{G}:=\mathcal{T}G.
\end{equation}
(ii) The operator $\mathcal{G}: [H^{\frac{1}{2}}(\partial D)]^3\rightarrow [L^2(S_R)]^3 $ is compact with a dense range in $[L^2(S_R)]^3 $.
\end{theorem}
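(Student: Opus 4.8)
The plan is to derive both parts purely by operator algebra from the identities already in hand: the factorization $N=-GH$ of (\ref{factorization1}), the relation $H^*=\mathcal{T}GS$ of Theorem \ref{h}, the unitarity $\mathcal{T}\mathcal{T}^*=\mathcal{T}^*\mathcal{T}=I$, and the mapping properties of $G$ from Theorem \ref{G-com}. No new analytic input is needed; the content lies in carefully tracking adjoints across the spaces $[H^{\pm 1/2}(\partial D)]^3$ and $[L^2(S_R)]^3$.

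For part (i), I would first left-multiply $N=-GH$ by $\mathcal{T}$ and absorb $\mathcal{T}G$ into $\mathcal{G}$, obtaining $\mathcal{T}N=-\mathcal{G}H$. It then remains to recognise $H$ as $S^*\mathcal{G}^*$. Taking the $L^2$-adjoint of the identity $H^*=\mathcal{T}GS$ and reversing the order of the factors gives $H=S^*G^*\mathcal{T}^*$, and since $\mathcal{G}^*=(\mathcal{T}G)^*=G^*\mathcal{T}^*$, this is precisely $H=S^*\mathcal{G}^*$. Substituting back yields $\mathcal{T}N=-\mathcal{G}S^*\mathcal{G}^*$, which is exactly (\ref{fac-N}).

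For part (ii), compactness of $\mathcal{G}=\mathcal{T}G$ follows at once: $G$ is compact by Theorem \ref{G-com} and $\mathcal{T}$ is bounded, so their composition is compact. For the dense range I would argue through the adjoint, in parallel with the proof of Theorem \ref{G-com}. The relation $\mathcal{T}\mathcal{T}^*=\mathcal{T}^*\mathcal{T}=I$ makes $\mathcal{T}$ invertible with $\mathcal{T}^{-1}=\mathcal{T}^*$, so $\mathcal{T}^*$ is injective; combined with the injectivity of $G^*$ established in Theorem \ref{G-com}, the factorization $\mathcal{G}^*=G^*\mathcal{T}^*$ shows $\mathcal{G}^*$ is injective, whence $\mathcal{G}$ has dense range. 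Equivalently, the homeomorphism $\mathcal{T}$ carries the dense set $\mathrm{Range}(G)$ onto $\mathrm{Range}(\mathcal{G})$, which is therefore dense.

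I do not anticipate a genuine obstacle here, as the statement is a formal consequence of the earlier results. The one point deserving attention is confirming that the adjoint manipulations in part (i) respect the Sobolev mapping properties of $S$, $G$, $H$ and $\mathcal{T}$, so that $S^*$, $G^*$ and $\mathcal{T}^*$ act between the intended spaces and the composition $\mathcal{G}S^*\mathcal{G}^*$ is well defined.
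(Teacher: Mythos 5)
Your proposal is correct and follows essentially the same route as the paper: part (i) is obtained by taking the adjoint of $H^*=\mathcal{T}GS$ to get $H=S^*G^*\mathcal{T}^*=S^*\mathcal{G}^*$ and substituting into $\mathcal{T}N=-\mathcal{T}GH$, and part (ii) combines the compactness and dense range of $G$ from Theorem \ref{G-com} with the unitarity of $\mathcal{T}$. Your dense-range argument via injectivity of $\mathcal{G}^*=G^*\mathcal{T}^*$ is a slightly more explicit justification than the paper's terse remark, but it is the same idea.
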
 
\begin{proof}
(i) By (\ref{factorization1}), we have $N=-GH.$ By Theorem \ref{h}, $H=S^*G^*\mathcal{T}^*=S^*\mathcal{G}^*.$ Hence $\mathcal{T}N=-\mathcal{T}GH=-\mathcal{T}GS^*G^*\mathcal{T}^*=-\mathcal{G}S^*\mathcal{G}^*.$ The proof is complete.\\
(ii) As $\mathcal{T}\mathcal{T}^*=\mathcal{T}^*\mathcal{T}=\mathbf{I}$, the operator $\mathcal{T}$ is bounded and has a dense range in $[L^2(S_R)]^3 $. Together with Theorem \ref{G-com}, one can find that  $ \mathcal{G}=\mathcal{T}G $ is compact with a dense range in $[L^2(S_R)]^3 $.
\end{proof}

Throughout this paper, we define the function $ \bm{\phi}^{\bm{a}}_{\bm{z}}(\cdot)=\overline{\Pi(\cdot,\bm{z})} \bm{a}|_{S_R}\in [L^2(S_R)]^3,\ \bm{a}\in \mathbb{S}^2.$
\begin{lemma}\label{range}
For $\bm{z}\in B_R $ and $\bm{a}\in \mathbb{S}^2$, $\bm{z}\in D$ if and only if $\bm{\phi}^{\bm{a}}_{\bm{z}}$ belongs to the range of $\mathcal{G}$.
\end{lemma}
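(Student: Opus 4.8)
The plan is to transport the entire question through the outgoing-to-incoming operator $\mathcal{T}$ and then reduce it to the classical range test for the solution operator $G$. By Lemma~\ref{T-character} we have $\bm{\phi}^{\bm{a}}_{\bm{z}}=\overline{\Pi(\cdot,\bm{z})}\bm{a}|_{S_R}=\mathcal{T}\big(\Pi(\cdot,\bm{z})\bm{a}|_{S_R}\big)$ for every $|\bm{z}|<R$. Since the identity $\mathcal{T}\mathcal{T}^{*}=\mathcal{T}^{*}\mathcal{T}=I$ established above shows that $\mathcal{T}$ is a bounded bijection of $[L^2(S_R)]^3$ onto itself with inverse $\mathcal{T}^{*}$, we get $\mathrm{Range}(\mathcal{G})=\mathrm{Range}(\mathcal{T}G)=\mathcal{T}\big(\mathrm{Range}(G)\big)$. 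Applying $\mathcal{T}^{-1}=\mathcal{T}^{*}$ to $\bm{\phi}^{\bm{a}}_{\bm{z}}$ therefore yields the equivalence
\begin{equation*}
\bm{\phi}^{\bm{a}}_{\bm{z}}\in\mathrm{Range}(\mathcal{G})\iff \Pi(\cdot,\bm{z})\bm{a}|_{S_R}\in\mathrm{Range}(G),
\end{equation*}
so it remains to show that the right-hand condition holds precisely when $\bm{z}\in D$.

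For the direction $\bm{z}\in D\Rightarrow$, I would note that when $\bm{z}\in D$ the singularity of $\Pi(\cdot,\bm{z})\bm{a}$ lies inside $D$, so each column of $\Pi(\cdot,\bm{z})\bm{a}$ is a smooth radiating solution of the Navier equation in $D^c$ obeying the Kupradze condition (\ref{radiation}). Its Dirichlet trace $\bm{h}:=\Pi(\cdot,\bm{z})\bm{a}|_{\partial D}\in[H^{\frac12}(\partial D)]^3$ then enters the definition of $G$: by uniqueness of the exterior Dirichlet problem the radiating solution with boundary value $\bm{h}$ is exactly $\Pi(\cdot,\bm{z})\bm{a}$, whence $G\bm{h}=\Pi(\cdot,\bm{z})\bm{a}|_{S_R}$ and the trace lies in $\mathrm{Range}(G)$.

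For the converse I would argue by contraposition, assuming $\bm{z}\in B_R\setminus D$ and $\Pi(\cdot,\bm{z})\bm{a}|_{S_R}=G\bm{h}$ for some $\bm{h}$, with associated radiating solution $\bm{v}\in[H^1_{loc}(D^c)]^3$. Setting $\bm{w}:=\bm{v}-\Pi(\cdot,\bm{z})\bm{a}$ produces a radiating Navier solution in $\{|\bm{x}|>R\}$ with zero Dirichlet data on $S_R$; uniqueness of the exterior Dirichlet problem forces $\bm{w}\equiv\bm{0}$, hence $\bm{v}=\Pi(\cdot,\bm{z})\bm{a}$ for $|\bm{x}|>R$. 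Because $D^c\setminus\{\bm{z}\}$ is connected and both sides are real-analytic solutions of $\Delta^{*}+\omega^2$ there, unique continuation propagates the identity $\bm{v}=\Pi(\cdot,\bm{z})\bm{a}$ to all of $D^c\setminus\{\bm{z}\}$. If $\bm{z}\in B_R\setminus\overline{D}$ this is contradictory, since $\bm{v}$ is smooth at $\bm{z}$ while $\Pi(\cdot,\bm{z})\bm{a}$ blows up as $\bm{x}\to\bm{z}$; if $\bm{z}\in\partial D$ the contradiction is one of regularity, as $\bm{v}$ is of finite energy up to the boundary near $\bm{z}$ whereas the $|\bm{x}-\bm{z}|^{-1}$ singularity of $\Pi(\cdot,\bm{z})\bm{a}$ makes $\nabla\Pi(\cdot,\bm{z})\bm{a}$ fail to be square-integrable over any region touching $\bm{z}$ with positive solid angle.

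I expect the converse to be the delicate part. The two points needing care are: (i) invoking unique continuation / the identity theorem for the Navier system on the connected set $D^c\setminus\{\bm{z}\}$, which relies on the real-analyticity of interior solutions of $\Delta^{*}+\omega^2$; and (ii) the boundary case $\bm{z}\in\partial D$, where the obstruction is not a pointwise blow-up inside the domain but the failure of the fundamental-tensor singularity to have finite energy up to $\partial D$, and this must be matched against the precise reading of $[H^1_{loc}(D^c)]^3$ as $H^1$ on bounded pieces of $D^c$ including the boundary.
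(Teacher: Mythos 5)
Your proposal is correct and follows essentially the same route as the paper: the forward direction realizes $\Pi(\cdot,\bm{z})\bm{a}$ itself as the radiating solution whose boundary trace is mapped by $\mathcal{G}$ onto $\bm{\phi}^{\bm{a}}_{\bm{z}}$, and the converse combines uniqueness of the exterior Dirichlet problem outside $S_R$, analytic continuation of the identity $\bm{v}=\Pi(\cdot,\bm{z})\bm{a}$ into $D^c\setminus\{\bm{z}\}$, and the singularity of the fundamental tensor at $\bm{z}$, with $\bm{z}\in\partial D$ treated as a separate regularity case. The only cosmetic differences are that you invert $\mathcal{T}$ up front via $\mathcal{T}^{*}=\mathcal{T}^{-1}$ where the paper simply invokes injectivity of $\mathcal{T}$, and that for $\bm{z}\in\partial D$ you derive the contradiction from $\nabla\bigl(\Pi(\cdot,\bm{z})\bm{a}\bigr)$ failing to be locally square-integrable, while the paper states it as $\Pi(\cdot,\bm{z})\bm{a}|_{\partial D}\notin[H^{1/2}(\partial D)]^3$ --- two equivalent expressions of the same regularity mismatch.
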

\begin{proof}
For $\bm{z}\in D$, the function $\bm{u}(\bm{x}):=\Pi(\bm{x},\bm{z}) \bm{a}, \bm{x}\in  D^c $, is the unique radiating solution to the scattering problem (\ref{Navier-equation})-(\ref{radiation}) with the boundary condition $\bm{f}(\bm{x}):=\Pi(\bm{x},\bm{z}) \bm{a}|_{ \partial D}.$ It follows form the definition of $\mathcal{G}$ and Lemma \ref{T-character} that $(\mathcal{G}\bm{f})(\bm{x})=\overline{\Pi(\bm{x},\bm{z})} \bm{a}, \bm{x}\in S_R.$ That is $\bm{\phi}^{\bm{a}}_{\bm{z}}\in \text{Range}(\mathcal{G}).$

On the other hand, let $\bm{z}\in  D^c $ and assume that $(\mathcal{G}\bm{f})=\bm{\phi}^{\bm{a}}_{\bm{z}}$ for some $\bm{f} \in  [H^{\frac{1}{2}}(\partial D)]^3.$ Let $ \bm{v} $ be the unique radiating solution to the scattering problem (\ref{Navier-equation})-(\ref{radiation}) with the boundary condition $\bm{v}=\bm{f}$ on $ \partial D.$ Since the operator $\mathcal{T}$ is injective,  $(\mathcal{G}\bm{f})=\bm{\phi}^{\bm{a}}_{\bm{z}}$ means that $(G\bm{f})(\bm{x})=\Pi(\bm{x},\bm{z}) \bm{a}|_{S_R}$. Thus, we get $\bm{v}(\bm{x})=\Pi(\bm{x},\bm{z}) \bm{a}$ for $|\bm{x}|\geq R$ due to the uniqueness of the exterior Dirichlet problem. Therefore $\bm{v}(\bm{x})=\Pi(\bm{x},\bm{z}) \bm{a}$ in $ D^c\setminus \{\bm{z}\}$, which contradicts with the analyticity of $\bm{v}$ in $  D^c  $. At last, if $\bm{z} \in \partial D$, $\Pi(\cdot,\bm{z}) \bm{a}|_{\partial D} \notin [H^{\frac{1}{2}}(\partial D)]^3 $ contradicts with the fact that $\bm{v}|_{\partial D} \in [H^{\frac{1}{2}}(\partial D)]^3 $. Hence, $\bm{z}\in D.$
\end{proof}

Next we give some properties of the single-layer operator $S$ (see \cite[Lemmas 6.1, 6.2]{A-K2002}), which extend the results \cite[Lemma 5.37]{Colton2013}  from acoustic scattering to the elastic case in $3D$.
\begin{lemma}\label{single-property}
Assume that $\omega^2$ is not the Dirichlet eigenvalue of the operator $-\Delta^*$ in $D$. Then\\
(i) The single-layer operator $S: [H^{-{\frac{1}{2}}}(\partial D)]^3\rightarrow [H^{\frac{1}{2}}(\partial D)]^3 $ is an isomorphism.\\
(ii) $Im \langle\bm{\psi}, S \bm{\psi}\rangle <0$ for $\bm{\psi} \in [H^{-{\frac{1}{2}}}(\partial D)]^3$ with $\bm{\psi}\neq 0$. Here, $\langle \cdot, \cdot\rangle$ denotes the duality between \\
 \indent $[H^{-{\frac{1}{2}}}(\partial D)]^3$ and $ [H^{\frac{1}{2}}(\partial D)]^3 $.\\
(iii) Let $S_i: H^{-{\frac{1}{2}}}(\partial D)^3\rightarrow H^{\frac{1}{2}}(\partial D)^3 $ be the single-layer operator for the frequency $\omega=i$. Then\\
\indent $S_i$ is compact, self-adjoint, and coercive.\\
(iv) The difference $S-S_i:  [H^{-{\frac{1}{2}}}(\partial D)]^3\rightarrow [H^{\frac{1}{2}}(\partial D)]^3 $ is compact. 
\end{lemma}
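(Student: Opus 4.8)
All four assertions are the elastic counterparts of the acoustic facts in \cite[Lemma 5.37]{Colton2013}, and I would recover them from three ingredients: the jump relations for the elastic single-layer potential $V$, Betti's formula for the Navier operator, and uniqueness of the exterior Dirichlet problem \cite{peter1993uniqueness}. Recall that $V\bm{\psi}$ is continuous across $\partial D$ with trace $S\bm{\psi}$, while its traction has the jump $(TV\bm{\psi})^{-}-(TV\bm{\psi})^{+}=\bm{\psi}$ on $\partial D$, the superscripts $-/+$ denoting traces from inside $D$ and from $D^c$ (both with $\bm{n}$ pointing out of $D$). The natural order is to establish (iv) and (iii) first, since (i) then follows by a Fredholm argument. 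For (iv) I would compare $\Pi$ at frequency $\omega$ with $\Pi_i$ at frequency $i$ term by term: since $\Phi_k(\bm{x},\bm{y})=\exp(ik|\bm{x}-\bm{y}|)/(4\pi|\bm{x}-\bm{y}|)$ has the frequency-independent leading singularity $1/(4\pi|\bm{x}-\bm{y}|)$, the scalar differences $\Phi_{k_s}-\Phi_{i/\sqrt{\mu}}$ and $\Phi_{k_p}-\Phi_{i/\sqrt{2\mu+\lambda}}$ are continuous, and this cancellation persists through the terms $\nabla_{\bm{x}}\nabla_{\bm{x}}^{\top}[\cdots]$. Thus $\Pi-\Pi_i$ has a kernel of strictly lower order, so $S-S_i$ is smoothing and compact from $[H^{-\frac{1}{2}}(\partial D)]^3$ into $[H^{\frac{1}{2}}(\partial D)]^3$; equivalently $S$ and $S_i$ are order $-1$ pseudodifferential operators with the same principal symbol.

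For (iii), self-adjointness is immediate because at $\omega=i$ the kernel $\Pi_i$ is real and symmetric ($\Phi_i(\bm{x},\bm{y})=\exp(-|\bm{x}-\bm{y}|)/(4\pi|\bm{x}-\bm{y}|)$ is real-valued). For coercivity I set $\bm{u}=V_i\bm{\psi}$, which solves $\Delta^{*}\bm{u}=\bm{u}$ in $D\cup D^c$ and decays exponentially at infinity, so that Betti's formula over $D$ and over $D^c$ together with the traction jump give
\begin{equation*}
\langle\bm{\psi},S_i\bm{\psi}\rangle=\int_{\mathbb{R}^3}\big[\mathcal{E}(\bm{u},\overline{\bm{u}})+|\bm{u}|^2\big]\,{\rm d}\bm{x},
\end{equation*}
where $\mathcal{E}$ is the elastic energy density, pointwise nonnegative because $\mu>0$ and $2\mu+3\lambda>0$. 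This is real and vanishes only for $\bm{u}\equiv\bm{0}$, i.e. $\bm{\psi}=\bm{0}$ via the jump; the lower bound $\langle\bm{\psi},S_i\bm{\psi}\rangle\geq c\,\|\bm{\psi}\|^2_{[H^{-1/2}(\partial D)]^3}$ then follows from Korn's inequality together with the estimate of $\|\bm{\psi}\|_{H^{-1/2}}=\|(T\bm{u})^{-}-(T\bm{u})^{+}\|_{H^{-1/2}}$ by the energy norm of $\bm{u}$. The compactness assertion is then read through the compact embedding $[H^{\frac{1}{2}}(\partial D)]^3\hookrightarrow[H^{-\frac{1}{2}}(\partial D)]^3$.

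Part (i) follows at once: $S_i$ is an isomorphism by Lax--Milgram, and by (iv) $S=S_i+(S-S_i)$ is a compact perturbation, hence Fredholm of index zero and thus an isomorphism once injective. For injectivity, $S\bm{\psi}=\bm{0}$ makes $V\bm{\psi}$ have vanishing Dirichlet trace from both sides; exterior uniqueness \cite{peter1993uniqueness} gives $V\bm{\psi}=\bm{0}$ in $D^c$, the non-eigenvalue hypothesis gives $V\bm{\psi}=\bm{0}$ in $D$, and the jump yields $\bm{\psi}=\bm{0}$. For (ii), taking $\bm{u}=V\bm{\psi}$ radiating, Betti over $D$ and over $B_R\setminus\overline{D}$ with the jump give, for every large $R$,
\begin{equation*}
\operatorname{Im}\langle\bm{\psi},S\bm{\psi}\rangle=-\operatorname{Im}\int_{S_R}\overline{\bm{u}}\cdot T\bm{u}\,{\rm d}s,
\end{equation*}
because all volume terms are real; letting $R\to\infty$ the Kupradze condition evaluates the right-hand side as $-c\big(k_p\|\bm{u}_p^{\infty}\|^2+k_s\|\bm{u}_s^{\infty}\|^2\big)$ with $c>0$, which is $\leq0$, and a vanishing far field would (by Rellich and the non-eigenvalue assumption, as in (i)) force $\bm{\psi}=\bm{0}$, giving the strict inequality.

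The main obstacle is twofold. First is the singularity bookkeeping in (iv): one must verify that not only the $1/|\bm{x}-\bm{y}|$ terms but also the stronger singularities created by $\nabla_{\bm{x}}\nabla_{\bm{x}}^{\top}[\Phi_{k_s}-\Phi_{k_p}]$ cancel against their $\omega=i$ counterparts, so that $\Pi-\Pi_i$ is genuinely smoothing. Second is upgrading the positivity in (iii) to the coercive lower bound, which needs the Korn/trace estimate rather than mere strict positivity. In both energy identities, keeping the correct orientation of $\bm{n}$ on the two faces of $\partial D$ and the correct sign in the traction jump is where errors most easily arise.
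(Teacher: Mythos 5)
Your reconstruction cannot be compared against an in-paper argument, because the paper does not prove this lemma at all: it is quoted from \cite[Lemmas 6.1, 6.2]{A-K2002} as the elastic counterpart of \cite[Lemma 5.37]{Colton2013}. What you wrote is, in substance, a correct reconstruction of the standard proof from that literature: traction jump plus Betti's formula for (ii) and (iii), Lax--Milgram and a Fredholm/compact-perturbation argument combined with the non-eigenvalue hypothesis and exterior uniqueness for (i), and smoothing of the kernel difference for (iv).

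Two refinements would make it airtight. First, in (iv) the kernel bookkeeping is the weak point you yourself flag: mere continuity of $\Pi-\Pi_i$ after cancellation of the $1/|\bm{x}-\bm{y}|$ singularities does not by itself give compactness from $[H^{-\frac{1}{2}}(\partial D)]^3$ into $[H^{\frac{1}{2}}(\partial D)]^3$; you need a full extra order of smoothing. The cleanest route avoids kernels entirely: set $\bm{w}:=V\bm{\psi}-V_i\bm{\psi}$. Since the displacement and traction jumps of the single-layer potential are frequency-independent, $\bm{w}$ has no jumps across $\partial D$, and it satisfies $\Delta^*\bm{w}+\omega^2\bm{w}=-(1+\omega^2)V_i\bm{\psi}\in[L^2_{loc}(\mathbb{R}^3)]^3$ distributionally in a neighborhood of $\overline{D}$; interior elliptic regularity gives $\bm{w}\in[H^2_{loc}(\mathbb{R}^3)]^3$, so $S\bm{\psi}-S_i\bm{\psi}=\bm{w}|_{\partial D}\in[H^{\frac{3}{2}}(\partial D)]^3$, which embeds compactly into $[H^{\frac{1}{2}}(\partial D)]^3$. (Your pseudodifferential remark yields the same conclusion but presumes a smooth boundary, whereas the paper only assumes $\partial D\in C^2$.) Second, in (ii) the sign of the conclusion is convention-dependent (orientation of the duality pairing, $e^{ikr}$ versus $e^{-ikr}$); this is worth pinning down because the paper later applies the range identity of \cite{A-N2008} with $T=S^*$ rather than $S$, precisely so that $\operatorname{Im}\langle\bm{\psi},S\bm{\psi}\rangle<0$ turns into the required non-negativity of $\operatorname{Im}T$ on the relevant range. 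Neither point is a gap in substance; both are the bookkeeping risks you correctly identified.
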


Relying on properties of the operators $\mathcal{G}$ and $S$ (see Lemmas \ref{range} and \ref{single-property}), the following range identity \cite[Theorem 2.15]{A-N2008} can be applied to the operator $\mathcal{T}N$. For an operator $F$, the operator$ F_{\sharp} $ is defined as $F_{\sharp}:=\vert Re[\exp(it)F]\vert + \vert Im F\vert$.

\begin{lemma}\label{range-identity}
 Let $X\subset U \subset X^*$ be a Gelfand triple with a Hilbert space $U$ and a reflexive Banach space $X$ such that the imbedding  is dense. Furthermore, let $Y$ be a second Hilbert space and let $F : Y \rightarrow Y, G: X \rightarrow Y$  and $T : X^* \rightarrow X$ be linear bounded operators such that $F=GTG^*$. Assume that  \\
(Al) $G$ is compact with dense range.  \\
(A2) There exists $t\in [0, 2\pi]$ such that $Re[\exp(it)T] $ has the form $Re[\exp(it)T] = T_0+T_1$ with \\
\indent some compact operator $T_1$ and some self-adjoint and coercive operator $T_0: X^*\rightarrow X$, i.e., \\\indent there exists $c > 0$

\begin{equation*}
 \langle \varphi, T_0\varphi \rangle \geq c||\varphi||^2 \qquad  \text{for all} \quad \varphi \in X^*.
\end{equation*}
(A3) $Im T$ is compact and non-negative on $\text{Range}(G^*) \subset X^*$, i.e., $\langle \varphi, (Im T)\varphi \rangle \geq 0$ for all\\
\indent $\varphi \in \text{Range}(G*)$.   \\
(A4) $Re[\exp(it)T] $ is one-to-one or $Im T$ is strictly positive on the closure $\overline{\text{Range}(G^*)}$ of \\
\indent $\text{Range}(G^*)$, i.e.,$\langle \varphi, (Im T)\varphi \rangle)> 0$ for all $\varphi \in \overline{\text{Range}(G^*)}$  with $\varphi\neq 0$.

Then the operator $F_{\sharp}$ is positive, and the ranges of $G : X \rightarrow Y$ and $F_{\sharp}^{1/2}: Y\rightarrow Y$ coincide.  
\end{lemma}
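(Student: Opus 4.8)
\emph{Overall strategy.} I would prove the statement by reducing it to a single clean ingredient: a basic range identity for factorizations through a self-adjoint, coercive ``middle'' operator. The first observation is that, since taking real and imaginary parts commutes with the sandwiching by $G$, the identity $F=GTG^*$ gives
$$
Re[\exp(it)F]=G\,Re[\exp(it)T]\,G^*,\qquad Im\,F=G\,(Im\,T)\,G^*,
$$
and both are compact self-adjoint operators on $Y$ because $G$ is compact. Hence $|Re[\exp(it)F]|$ and $|Im\,F|$ are well defined through the spectral calculus, $F_{\sharp}$ is compact self-adjoint, and (since both summands turn out to be non-negative, as explained below) $F_{\sharp}\ge 0$, giving the positivity claim. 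Everything then reduces to the range equality.

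\emph{The basic range identity.} The tool I would invoke is the standard functional-analytic fact that, for a compact $G$ and a non-negative compact self-adjoint $F_{\sharp}$, one has $\text{Range}(G)=\text{Range}(F_{\sharp}^{1/2})$ as soon as there exist constants $c_1,c_2>0$ with
$$
c_1\,\|G^*\psi\|_{X^*}\le\|F_{\sharp}^{1/2}\psi\|_Y\le c_2\,\|G^*\psi\|_{X^*}\qquad\text{for all }\psi\in Y.
$$
Thus the whole problem is to produce such a two-sided bound. If I can exhibit a self-adjoint operator $T_{\sharp}\colon X^*\to X$, \emph{coercive on} $V:=\overline{\text{Range}(G^*)}$, with $F_{\sharp}=G\,T_{\sharp}\,G^*$, then writing $\|F_{\sharp}^{1/2}\psi\|_Y^2=(\psi,F_{\sharp}\psi)_Y=\langle G^*\psi,\,T_{\sharp}G^*\psi\rangle$ and using $G^*\psi\in V$, coercivity delivers the lower bound and boundedness of $T_{\sharp}$ the upper bound. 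So the heart of the matter is the construction of this $T_{\sharp}$.

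\emph{Constructing $T_{\sharp}$.} The imaginary part is harmless: by (A3) one has $\langle G^*\psi,(Im\,T)G^*\psi\rangle\ge 0$, so $Im\,F\ge 0$ on $Y$ and $|Im\,F|=Im\,F=G(Im\,T)G^*$ with no absolute value to remove. The real part is where the work lies. Set $A:=Re[\exp(it)F]=G R G^*$ with $R:=Re[\exp(it)T]=T_0+T_1$. Using (A2) I would first show that $A$ has only finitely many negative eigenvalues: the form $\varphi\mapsto\langle\varphi,R\varphi\rangle=\langle\varphi,T_0\varphi\rangle+\langle\varphi,T_1\varphi\rangle$ has a first term equivalent to $\|\varphi\|^2$ and a second term that is weakly sequentially continuous (as $T_1$ is compact), hence has finite negative index; since $\langle\psi,A\psi\rangle=\langle G^*\psi,R\,G^*\psi\rangle$ and $G^*\psi\neq 0$ whenever $\langle\psi,A\psi\rangle<0$, the negative spectral subspace of $A$ embeds into $X^*$ and is finite-dimensional. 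Consequently $|A|=A+2A_-$ with $A_-\ge 0$ of finite rank; each negative eigenfunction obeys $\psi_j=\lambda_j^{-1}G(RG^*\psi_j)=Gw_j$ with $w_j\in X$, so $D\eta:=\sum_j 2(-\lambda_j)\langle w_j,\eta\rangle\,w_j$ defines a self-adjoint, non-negative, finite-rank $D\colon X^*\to X$ with $GDG^*=2A_-$. Therefore $|A|=G(R+D)G^*$, and together with the imaginary part
$$
F_{\sharp}=G\,T_{\sharp}\,G^*,\qquad T_{\sharp}:=T_0+\big(T_1+D+Im\,T\big),
$$
which is self-adjoint, a compact perturbation of the coercive $T_0$, and satisfies $T_{\sharp}\ge 0$ on $V$ because $\langle G^*\psi,T_{\sharp}G^*\psi\rangle=(\psi,F_{\sharp}\psi)_Y\ge 0$.

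\emph{Coercivity of $T_{\sharp}$ on $V$ (the main obstacle).} This is the step I expect to be the crux. Writing $T_{\sharp}=T_0+K$ with $K$ compact and arguing by contradiction, I would take $\varphi_k\in V$, $\|\varphi_k\|=1$, with $\langle\varphi_k,T_{\sharp}\varphi_k\rangle\to 0$, pass to a weak limit $\varphi\in V$ (using reflexivity of $X^*$ and weak closedness of $V$), and combine compactness of $K$ with weak lower semicontinuity of the coercive form $\langle\cdot,T_0\cdot\rangle$ to deduce $\langle\varphi,T_{\sharp}\varphi\rangle\le 0$ while $\varphi\neq 0$. Since $T_{\sharp}\ge 0$ on $V$ this forces $\langle\varphi,T_{\sharp}\varphi\rangle=0$, and as both $R+D$ and $Im\,T$ are non-negative on $V$ each contributes zero separately; assumption (A4) (either strict positivity of $Im\,T$ on $V$, or injectivity of $Re[\exp(it)T]$ together with the finite rank of $D$) then yields $\varphi=0$, a contradiction. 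Hence $T_{\sharp}$ is coercive on $V$, and the basic range identity of the second paragraph completes the proof that $\text{Range}(F_{\sharp}^{1/2})=\text{Range}(G)$.
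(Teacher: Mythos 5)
You should first note that the paper never proves this lemma: it is quoted, hypotheses and all, from Kirsch and Grinberg \cite[Theorem 2.15]{A-N2008}, so the only meaningful benchmark is the textbook proof, and your overall architecture coincides with it --- factor $F_{\sharp}=GT_{\sharp}G^*$ with $T_{\sharp}$ a compact self-adjoint perturbation of $T_0$, prove coercivity of $T_{\sharp}$ on $V:=\overline{\text{Range}(G^*)}$, and finish with the Douglas-type lemma converting the two-sided estimate $c_1\|G^*\psi\|\le\|F_{\sharp}^{1/2}\psi\|\le c_2\|G^*\psi\|$ into equality of ranges. Your intermediate steps are sound: the factorizations of $Re[\exp(it)F]$ and $Im\, F$ through $G$; the finite dimensionality of the negative spectral subspace of $A:=Re[\exp(it)F]$; the fact that each negative eigenfunction $\psi_j=Gw_j$ lies in $\text{Range}(G)$, so that $2A_-=GDG^*$ with $D$ finite rank; the identity $|Im\,F|=Im\,F$; and the weak-compactness argument producing, if coercivity fails, some $\varphi\in V$, $\varphi\neq 0$, with $\langle\varphi,(R+D)\varphi\rangle=0$ and $\langle\varphi,(Im\,T)\varphi\rangle=0$, where $R:=Re[\exp(it)T]$.

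The genuine gap is your final sentence, which happens to be the crux of the theorem: in the branch of (A4) where $R$ is merely one-to-one, you assert that ``injectivity of $Re[\exp(it)T]$ together with the finite rank of $D$ yields $\varphi=0$''. That is not a valid principle (an injective self-adjoint operator plus a finite-rank self-adjoint perturbation can fail to be injective), and worse, you do not even possess the equation $(R+D)\varphi=0$: vanishing of the semi-definite form gives, via Cauchy--Schwarz on $V$, only $\langle\phi,(R+D)\varphi\rangle=0$ for all $\phi\in V$, i.e.\ $(R+D)\varphi\in\ker G$. Two missing ingredients are needed to close this. First, $\ker G=\{0\}$, which is not among (A1)--(A4); without it the lemma as stated is in fact false: take $X=U=X^*=Y=\ell^2$ with orthonormal bases $\{e_k\}$, $\{f_j\}$, let $Ge_1=0$, $Ge_k=k^{-1}f_{k-1}$ for $k\ge 2$, and let $T$ be self-adjoint, equal to the identity except $Te_1=e_1+e_2$, $Te_2=e_1$; then (A1)--(A4) hold with $t=0$, yet $F_{\sharp}f_1=0$ while $f_1=2Ge_2\in\text{Range}(G)$, so the two ranges differ. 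You must either add injectivity of $G$ as a hypothesis or observe (as this paper implicitly does, see Theorem \ref{G-com}: here $\mathcal{G}=\mathcal{T}G$ is injective) that it is available where the lemma is applied. Second, granting $(R+D)\varphi=0$, the finite rank of $D$ is not what matters; its specific structure is: since $w_j=\lambda_j^{-1}RG^*\psi_j$, you get $R\varphi=-D\varphi\in\text{span}\{w_j\}=R\bigl(\text{span}\{G^*\psi_j\}\bigr)$, so injectivity of $R$ forces $\varphi=G^*\psi$ with $\psi=\sum_j c_j\psi_j$, and then
\[
0=\langle\varphi,(R+D)\varphi\rangle=\bigl(\psi,|A|\psi\bigr)_Y=\sum_j|\lambda_j|\,|c_j|^2 ,
\]
whence all $c_j=0$ and $\varphi=0$, the desired contradiction. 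Without these two repairs the decisive case of (A4) remains unproven.
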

 
Applying Lemma \ref{range-identity}, we have the following result.
\begin{theorem}\label{result}
Assume that $\omega^2$ is not the Dirichlet eigenvalue of the operator $-\Delta^*$ in $D$. Let $\bm{z}\in B_R $ and $\bm{\phi}^{\bm{a}}_{\bm{z}}$ be given in Lemma \ref{range}. Denote by $\lambda_j \in \mathbb{C} $ the eigenvalues of the operator\\
 $(\mathcal{T}N)_{\sharp}$ with the corresponding normalized eigenfunctions  $\bm{\psi}_j\in [L^2(S_R)]^3$.  Then\\
(i) $\bm{z}\in D$ if and only if $\bm{\phi}^{\bm{a}}_{\bm{z}}$ belongs to the range $\text{Range}\left( (\mathcal{T}N)_{\sharp} ^{1/2}\right)$ of $ (\mathcal{T}N)_{\sharp} ^{1/2}$.\\
(ii) $\bm{z}\in D$ if and only if 
\begin{equation}\label{function-W}
W^{\bm{a}}(\bm{z}):=\left[\sum_{j=1}^{\infty}\dfrac{\vert\langle \bm{\phi}^{\bm{a}}_{\bm{z}}, \bm{\psi}_j\rangle_{[L^2(S_R)]^3}\vert^2}{\vert  \lambda_j \vert}\right]^{-1}>0.
\end{equation}
\end{theorem}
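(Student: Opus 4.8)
The plan is to apply the abstract range identity of Lemma~\ref{range-identity} to the symmetric factorization (\ref{fac-N}), namely $\mathcal{T}N=-\mathcal{G}S^*\mathcal{G}^*$ with $\mathcal{G}=\mathcal{T}G$, and then to translate the resulting range equality into the series criterion by Picard's theorem. Concretely, I would work with the Gelfand triple $X=[H^{\frac{1}{2}}(\partial D)]^3\subset U=[L^2(\partial D)]^3\subset X^*=[H^{-\frac{1}{2}}(\partial D)]^3$, take $Y=[L^2(S_R)]^3$, and identify the abstract data in Lemma~\ref{range-identity} as $G=\mathcal{G}$ and $T=-S^*$, so that $F=\mathcal{T}N=GTG^*$. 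The single-layer operator $S$ maps $X^*\to X$, hence so does $-S^*$, and the factorization (\ref{fac-N}) is exactly of the required form $F=GTG^*$.

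Next I would check the four hypotheses (A1)--(A4). Hypothesis (A1) is immediate, since the theorem yielding (\ref{fac-N}) already asserts that $\mathcal{G}$ is compact with dense range in $[L^2(S_R)]^3$. For (A2) and the coercivity decomposition, I would use the splitting $S=S_i+(S-S_i)$ from Lemma~\ref{single-property}: by part (iii) the operator $S_i$ is self-adjoint, compact and coercive, and by part (iv) the difference $S-S_i$ is compact, so for a suitable rotation parameter $t$ the real part $\mathrm{Re}[e^{it}(-S^*)]$ splits as a coercive self-adjoint operator $T_0$ plus a compact remainder $T_1$. For (A3) and (A4), I would transfer the sign condition $\mathrm{Im}\langle\bm{\psi},S\bm{\psi}\rangle<0$ of Lemma~\ref{single-property}(ii) to $T=-S^*$; the compactness of $\mathrm{Im}\,T$ follows from the smoothness of the imaginary part of the kernel $\Pi$, while the strict definiteness needed in (A4) follows because the associated quadratic form vanishes only at $\bm{\psi}=\bm{0}$. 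Since $F_\sharp$ involves only $|\mathrm{Re}[e^{it}F]|$ and $|\mathrm{Im}\,F|$, the overall sign of $F$ is immaterial, and if convenient one may equally apply Lemma~\ref{range-identity} to $-\mathcal{T}N=\mathcal{G}S^*\mathcal{G}^*$ to place the imaginary part on the correct side.

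Once (A1)--(A4) are verified, Lemma~\ref{range-identity} yields that $(\mathcal{T}N)_\sharp$ is positive and that $\mathrm{Range}(\mathcal{G})=\mathrm{Range}((\mathcal{T}N)_\sharp^{1/2})$. Part (i) then follows immediately by combining this range equality with Lemma~\ref{range}, which states that $\bm{z}\in D$ if and only if $\bm{\phi}^{\bm{a}}_{\bm{z}}\in\mathrm{Range}(\mathcal{G})$. For part (ii), I would use that $(\mathcal{T}N)_\sharp$ is a positive, self-adjoint, compact operator, so that its eigenvalues $\lambda_j$ are real and non-negative with orthonormal eigenfunctions $\bm{\psi}_j$; Picard's theorem applied to $(\mathcal{T}N)_\sharp^{1/2}$ then characterizes $\bm{\phi}^{\bm{a}}_{\bm{z}}\in\mathrm{Range}((\mathcal{T}N)_\sharp^{1/2})$ by the convergence of $\sum_{j}|\langle\bm{\phi}^{\bm{a}}_{\bm{z}},\bm{\psi}_j\rangle|^2/|\lambda_j|$, which is precisely the statement $W^{\bm{a}}(\bm{z})>0$. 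Combining this equivalence with part (i) completes the proof.

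The step I expect to be the main obstacle is the careful bookkeeping of adjoints and signs in passing from the stated properties of $S$ to the operator $T=-S^*$ occurring in (\ref{fac-N}): one must choose a single $t\in[0,2\pi]$ that simultaneously renders $\mathrm{Re}[e^{it}T]$ coercive-plus-compact and gives $\mathrm{Im}\,T$ the correct definiteness, and the definiteness conditions (A3)--(A4) must be verified on the subspace $\overline{\mathrm{Range}(\mathcal{G}^*)}$ rather than on all of $X^*$. Everything else --- compactness of $\mathrm{Im}\,T$ and of $S-S_i$, coercivity of $S_i$, and the Picard argument --- is routine given Lemmas~\ref{single-property} and~\ref{range-identity}.
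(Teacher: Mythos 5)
Your proposal is correct and takes essentially the same route as the paper: the paper's proof also applies Lemma~\ref{range-identity} to the factorization (\ref{fac-N}) with $X=[H^{\frac{1}{2}}(\partial D)]^3$, $Y=[L^2(S_R)]^3$, $G=\mathcal{G}$, $T_0=S_i$, $T_1=\mathrm{Re}(S-S_i)$, $t=0$, then invokes Lemma~\ref{range} for (i) and Picard's criterion for (ii). Your treatment is in fact slightly more careful than the paper's, which writes $T=S^*$ despite the minus sign in $\mathcal{T}N=-\mathcal{G}S^*\mathcal{G}^*$; your observation that $F_\sharp$ is insensitive to this sign resolves that discrepancy.
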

\begin{proof}
 Set $t=0, F=\mathcal{T}N, G=\mathcal{G}, T=S^*, T_0=S_i, T_1=Re(S-S_i), X= H^{\frac{1}{2}}(\partial D)^3$ and $Y=[L^2(S_R)]^3 $. Then application of Lemmas \ref{range}, \ref{single-property} and \ref{range-identity} gives (i).
 (ii) follows from Picard's range criterion. 
\end{proof} 
 
By Lemma \ref{range}, one can choose any vector $\bm{a}\in \mathbb{S}^2$ in Theorem \ref{result}. For multiple vectors $\{\bm{a}_i\in \mathbb{S}^2: j=1,2,3\}$, we define the following indicator function: 
\begin{equation}\label{function-W}
W(\bm{z})=\left[\sum_{j=1}^{3}\frac{1}{W^{\bm{a}_j}(\bm{z})} \right]^{-1},\qquad  \bm{z}\in B_R.
\end{equation}

\begin{lemma}
 $\bm{z}\in D$ if and only if $W(\bm{z})>0.$
\end{lemma}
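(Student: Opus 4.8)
The plan is to reduce the claim to Theorem \ref{result}(ii) applied separately to each of the three polarizations $\bm{a}_1,\bm{a}_2,\bm{a}_3\in \mathbb{S}^2$. The key observation is that, by the definition of $W^{\bm{a}_j}(\bm{z})$ in Theorem \ref{result}(ii) together with the defining formula for $W(\bm{z})$, the reciprocal of $W(\bm{z})$ is exactly the sum of the reciprocals of the single-polarization indicators:
\[
\frac{1}{W(\bm{z})}=\sum_{j=1}^{3}\frac{1}{W^{\bm{a}_j}(\bm{z})}=\sum_{j=1}^{3}\sum_{i=1}^{\infty}\frac{\vert\langle \bm{\phi}^{\bm{a}_j}_{\bm{z}},\bm{\psi}_i\rangle_{[L^2(S_R)]^3}\vert^2}{\vert\lambda_i\vert},
\]
understood as an identity in $[0,+\infty]$ with the conventions $1/0=+\infty$ and $1/(+\infty)=0$.

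First I would record that Theorem \ref{result}(ii) holds verbatim for each fixed direction $\bm{a}_j$, since each $\bm{a}_j$ lies in $\mathbb{S}^2$ and the theorem is stated for an arbitrary such polarization. Thus for every $j\in\{1,2,3\}$ we have the equivalence $\bm{z}\in D \Longleftrightarrow W^{\bm{a}_j}(\bm{z})>0$, equivalently, $\bm{z}\in D$ if and only if the inner series $\sum_{i}\vert\langle \bm{\phi}^{\bm{a}_j}_{\bm{z}},\bm{\psi}_i\rangle\vert^2/\vert\lambda_i\vert$ converges.

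For the forward direction, assuming $\bm{z}\in D$, each of the three inner series converges, so every $1/W^{\bm{a}_j}(\bm{z})$ is a finite positive number; their sum over $j$ is finite, and hence $W(\bm{z})>0$. For the converse I would argue by contraposition: if $\bm{z}\notin D$, then by Theorem \ref{result}(ii) every $W^{\bm{a}_j}(\bm{z})=0$, so each reciprocal $1/W^{\bm{a}_j}(\bm{z})$ equals $+\infty$, the finite sum over $j$ is $+\infty$, and therefore $W(\bm{z})=0$. Combining the two directions yields the stated equivalence.

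The argument is essentially bookkeeping: there is no genuine analytic obstacle once Theorem \ref{result} is in hand, and the linear independence of the $\bm{a}_j$ plays no role in the qualitative equivalence (it matters only for the robustness of the reconstruction in practice). The only point requiring a little care is the handling of the extended-real arithmetic, namely making explicit that a single divergent inner series already forces $W(\bm{z})=0$, so that one polarization detecting $\bm{z}\notin D$ suffices, while all three must detect $\bm{z}\in D$ for $W(\bm{z})$ to be positive; since $\bm{z}\in D$ is a single geometric condition, these requirements are mutually consistent.
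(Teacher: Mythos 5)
Your proposal is correct and follows essentially the same route as the paper: both reduce the claim to applying Theorem \ref{result} separately to each polarization $\bm{a}_j$, concluding that $\bm{z}\in D$ forces every $W^{\bm{a}_j}(\bm{z})$ to be finite and positive (hence $W(\bm{z})>0$), while $\bm{z}\notin D$ forces every $W^{\bm{a}_j}(\bm{z})=0$ (hence $W(\bm{z})=0$). Your explicit handling of the extended-real conventions merely makes precise what the paper's two-line argument leaves implicit.
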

\begin{proof}
If $\bm{z}\in D$, Theorem \ref{result} implies that $0<W^{\bm{a}_j}(\bm{z})<\infty$ for all $j=1,2,3,$ and thus $W(\bm{z})>0.$ On the other hand, if $\bm{z} \notin D$, again applying Theorem \ref{result} gives $ W^{\bm{a}_j}(\bm{z})=0$ for all $j=1,2,3,$ implying that $W(\bm{z})=0.$
\end{proof}

\section{Numerical experiments in $ \mathbb{R}^2$} \label{Numeric}
In this section, we present some numerical examples in $\mathbb{R}^2$ to illustrate the applicability and effectiveness    of our inversion schemes.
\subsection{OtI operator in 2D}
We first give the outgoing-to-incoming operator $\mathcal{T}$ in $\mathbb{R}^2$. By employing the polar coordinates we write $\bm{x}=(r\cos \theta_{\bm{x}}, r\sin \theta_{\bm{x}} )$. According to the Hodge decomposition, we can introduce scalar functions $\varphi(r, \theta_{\bm{x}})$ and $ \psi(r, \theta_{\bm{x}}) $ such that the scattered field $\bm{u}$ has the form
\begin{equation}\label{2-scatteredu}
\bm{u}(r, \theta_{\bm{x}})=\text{grad}\ \varphi(r, \theta_{\bm{x}})+ \text{curl}\ \psi(r, \theta_{\bm{x}}),
\end{equation}
where $\text{curl}\ f=(-\partial_2 f, \partial_1 f)^\top$ and $\varphi(r, \theta_{\bm{x}})$, $ \psi(r, \theta_{\bm{x}}) $ are radiating solutions to the Helmholtz equation.

Recall the relationship between the Cartesian and polar coordinates for gradient:
\begin{equation*}
\begin{pmatrix}
\partial_1 \\
\partial_2
\end{pmatrix}=\begin{pmatrix}
\cos \theta_{\bm{x}}&-\frac{1}{r}\sin \theta_{\bm{x}}\\
 \sin \theta_{\bm{x}}&\frac{1}{r}\cos \theta_{\bm{x}}
\end{pmatrix}\begin{pmatrix}
\partial_r \\
\partial_{\theta}
\end{pmatrix}.
\end{equation*}
Then radiating solutions $\varphi(r, \theta_{\bm{x}})$ and $ \psi(r, \theta_{\bm{x}}) $ can be expanded into
\begin{equation}\label{2-expansion}
\varphi(r, \theta_{\bm{x}})=\sum_{n\in \mathbb{Z}}\alpha_nH_n^{(1)}(k_pr)e^{in\theta_{\bm{x}}},\ \ \psi(r, \theta_{\bm{x}})=\sum_{n\in \mathbb{Z}}\beta_n H_n^{(1)}(k_sr)e^{in\theta_{\bm{x}}},
\end{equation}
where $H_n^{(1)}(k_pr),H_n^{(1)}(k_sr)$ are Hankel functions of first kind of order $n$ and $\alpha_n, \beta_n\in \mathbb{C}$ are coefficients.

Combining (\ref{2-scatteredu}) and (\ref{2-expansion}), we get

\begin{equation}\label{2-scatteredexpansion}
\bm{u}(r, \theta_{\bm{x}})=\sum_{n\in \mathbb{Z}}M(\theta_{\bm{x}})^\top A_n(r)(\alpha_n, \beta_n)^\top e^{in\theta_{\bm{x}}},
\end{equation}
where matrices $M(\theta_{\bm{x}})$ and $A_n(r)$ are defined as 
\begin{equation*}
M(\theta_{\bm{x}})=\begin{pmatrix}
\cos \theta_{\bm{x}}&\sin \theta_{\bm{x}}\\
-\sin \theta_{\bm{x}}&\cos \theta_{\bm{x}}
\end{pmatrix},\ \ A_n(r)=\begin{pmatrix}
 k_pH_n^{(1)'}(k_pr)&-inH_n^{(1)}(k_sr)/r \\
 inH_n^{(1)}(k_pr)/r& k_sH_n^{(1)'}(k_sr)
\end{pmatrix}.
\end{equation*}

The fundamental solution to the Navier equation in $\mathbb{R}^2$ is
\begin{equation}\label{2-fundamental}
 \Pi(\bm{x},\bm{y})=\frac{1}{\mu}\Phi_{k_s}(\bm{x},\bm{y})\mathbf{I}+\frac{1}{\omega^2}\nabla_{\bm{x}}\nabla_{\bm{x}}^\top[\Phi_{k_s}(\bm{x},\bm{y})-\Phi_{k_p}(\bm{x},\bm{y})],\qquad \bm{x}\neq \bm{y},
 \end{equation}
where $\mathbf{I}$ stands for the unit matrix and $\Phi_{k}(\bm{x},\bm{y})=\dfrac{i}{4}H_0^{(1)}(k|\bm{x}-\bm{y}|)$ is the free-space fundamental solution to the Helmholtz equation $(\Delta + k^2) u=0$ in $ \mathbb{R}^2$.

Recall the definition of  OtI operator $\mathcal{T}$ in $\mathbb{R}^3$, the two-dimensional outgoing-to-incoming operator $\mathcal{T}$ can be represented as  

\begin{equation}\label{OtI2}
(\mathcal{T}\bm{\varphi})(\bm{x}):=\int_{S_R}  K(\bm{x},\bm{y})\bm{\varphi}(\bm{y}){\rm{d}}s(\bm{y}), \ \ \ \bm{\varphi}(\bm{x})\in [L^2(S_R)]^2,
\end{equation}
where
\begin{equation*}
B_n(R)=\begin{pmatrix}
 k_p\overline{H_n^{(1)'}(k_pR)}&-in\overline{H_n^{(1)}(k_sR)}/R \\
 in\overline{H_n^{(1)}(k_pR)}/r& k_s\overline{H_n^{(1)'}(k_sR)}
\end{pmatrix}
\end{equation*}
and the integral kernel $K(\bm{x},\bm{y})$ is defined as
\begin{equation*}
K(\bm{x},\bm{y}):=-\frac{1}{2\pi R}\sum_{n\in \mathbb{Z}} \left[ M(\theta_{\bm{x}})\right]^\top B_n(R)A_n(R)^{-1}  M(\theta_{\bm{y}})e^{in(\theta_{\bm{x}}-\theta_{\bm{y}})}.
\end{equation*}
In our numerical implementation, the operator $\mathcal{T}$ is approximated by the truncated operator
 \begin{equation*}
 (\mathcal{T}_{M_1}\bm{\varphi})(\bm{x}):=\int_{S_R}  K_{M_1}(\bm{x},\bm{y})\bm{\varphi}(\bm{y}){\rm{d}}s(\bm{y}), \ \ \ \bm{\varphi}(\bm{x})\in [L^2(S_R)]^2,
 \end{equation*}
with the kernel given by
\begin{equation*}
K_{M_1}(\bm{x},\bm{y}):=-\frac{1}{2\pi R}\sum_{n=-M_1}^{n=M_1} \left[ M(\theta_{\bm{x}})\right]^\top B_n(R)A_n(R)^{-1}  M(\theta_{\bm{y}})e^{in(\theta_{\bm{x}}-\theta_{\bm{y}})},
\end{equation*}
for some $M_1\in \mathbb{Z}^{+}$.
\subsection{Numerical examples}
In our simulations, we used the boundary integral equation method to compute the near-field data on $S_R.$ Let $\bm{a}_1=(1,0)^\top$ and $\bm{a}_2=(0,1)^\top$ denote two incident polarization vectors. Define the step size $\triangle \theta_{\bm{x}}=\triangle \theta_{\bm{y}}=2\pi/M_2$ for some $M_2\in \mathbb{Z}^{+}$, that is 
\begin{equation*}
\theta_{\bm{x}_j}=(j-1)\triangle \theta_{\bm{x}},\qquad \theta_{\bm{y}_j}=(j-1)\triangle \theta_{\bm{y}},\qquad j=1,2,\ldots,M_2.
\end{equation*}
We choose $2M_2$ incident point sources as 
\begin{equation*}
\mathcal{K}=\{\Pi(\bm{x},\bm{y}_j)\bm{a}_1, \Pi(\bm{x},\bm{y}_j)\bm{a}_2: j=1,2,\ldots,M_2\},
\end{equation*}
where $\bm{y}_j=R(\cos \theta_{\bm{y}_j}, \sin \theta_{\bm{y}_j})$. Denote by $\bm{u}(\bm{x},\bm{y}_j; \bm{a}_1)$ and $\bm{u}(\bm{x},\bm{y}_j; \bm{a}_2)$ the scattered field corresponding to the incident field $\Pi(\bm{x},\bm{y}_j)\bm{a}_1$ and $\Pi(\bm{x},\bm{y}_j)\bm{a}_2$, respectively. Then we have the near-field matrix
\begin{equation*}
\bm{N}_{M_2\times 2M_2}=[\bm{b}_{1,1},\bm{b}_{1,2},\bm{b}_{2,1},\bm{b}_{2,2},\ldots,\bm{b}_{M_2,1},\bm{b}_{M_2,1}],
\end{equation*}
where
\begin{align*}
\bm{b}_{j,1}:=\left(\bm{u}(\bm{x}_1,\bm{y}_j; \bm{a}_1),\bm{u}(\bm{x}_2,\bm{y}_j; \bm{a}_1),\ldots, \bm{u}(\bm{x}_{M_2},\bm{y}_j; \bm{a}_1)\right)^\top\in \mathbb{C}^{ M_2\times 1},\\
  \bm{b}_{j,2}:=\left(\bm{u}(\bm{x}_1,\bm{y}_j; \bm{a}_2),\bm{u}(\bm{x}_2,\bm{y}_j; \bm{a}_2),\ldots, \bm{u}(\bm{x}_{M_2},\bm{y}_j; \bm{a}_2)\right)^\top\in \mathbb{C}^{ M_2\times 1}.
\end{align*}
The outgoing to incoming operator $\mathcal{T}$ is approximated by the following matrix
\begin{equation*}
\bm{\mathcal{T}}_{M_2\times M_2}=\left( K_{M_1}(\bm{x}_i,\bm{y}_j)\right)_{ M_2\times M_2},
\end{equation*}
where $K_{M_1}(\bm{x}_i,\bm{y}_j)$ is defined as 
\begin{equation*}
K_{M_1}(i,j):=-\frac{1}{2\pi R}\sum_{n=-M_1}^{n=M_1} \left[ M(\theta_{\bm{x}_i})\right]^\top B_n(R)A_n(R)^{-1}  M(\theta_{\bm{y}_j})e^{in(\theta_{\bm{x}_i}-\theta_{\bm{y}_j})},\ i,j=1,2,\ldots,M_2.
\end{equation*}
Letting $\bm{\mathcal{T}N_{M_2\times 2M_2}}:=\bm{\mathcal{T}}_{M_2\times M_2}\bm{N}_{M_2\times 2M_2}$, the characteristic function $W$ defined in (\ref{function-W}) can be approximated by the series
\begin{equation*}
W^{\bm{a}}_{M_2}(\bm{z})=\left[\sum_{j=1}^{M_1}\dfrac{\vert\langle \bm{\phi}^{\bm{a}}_{\bm{z}}, \bm{\psi}_j\rangle_{[L^2(S_R)]^3}\vert^2}{\vert  \lambda_j \vert}\right]^{-1},
\end{equation*}
where $ \bm{\phi}^{\bm{a}}_{\bm{z}}=\{\overline{\Pi(\bm{x}_1,\bm{z})} \bm{a}, \overline{\Pi(\bm{x}_2,\bm{z})} \bm{a}, \ldots, \overline{\Pi(\bm{x}_{M_2},\bm{z})} \bm{a}\}^\top$ with $\bm{a}=(\cos \alpha, \sin \alpha)^\top$ and $\{ \bm{\psi}_j, \lambda_j\}_{j=1}^{M_2}$
is an eigensystem of the matrix $\bm{\mathcal{T}N} :=|Re(\bm{\mathcal{T}N} )|+|Im(\bm{\mathcal{T}N} )|.$ 
To test the stability, we add relative noise to the data
\begin{equation*}
\bm{u}^{\delta}(\bm{x}_i,\bm{y}_j; \bm{a}_s)=\bm{u}(\bm{x}_i,\bm{y}_j; \bm{a}_s)(1+\delta\, \text{rand}),\qquad i,j=1,2,\ldots M_2,\ s=1,2.
 \end{equation*} 
where rand are uniformly distributed random numbers in $[-1,1]$. 

In the following experiments, we set $\lambda=2, \mu=1, \omega=10, R=4, M_1=40$ and $M_2=64$. We choose two polar vectors as $\bm{a}_1=(1,0)^\top$ and $\bm{a}_2=(0,1)^\top.$

\noindent {\bf{Example 1}}: Consider a kite-shaped obstacle, which has the parametric equation
\begin{equation*}
 \bm{x}(t)= (\cos t +0.65\cos(2t)-0.65,\ 1.5\sin t ),\qquad t\in[0,2\pi].
\end{equation*}
\begin{figure}
	\centering
	\subfigure[The kite-shaped obstacle]{
		\includegraphics[width=2in,height=1.5in]{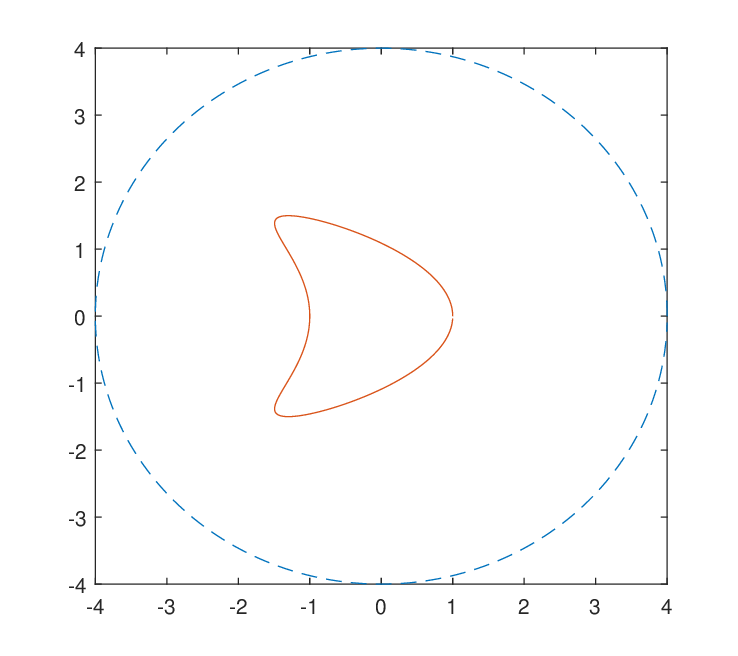}
	}	
 	\subfigure[No noise]{
 		\includegraphics[width=2in,height=1.5in]{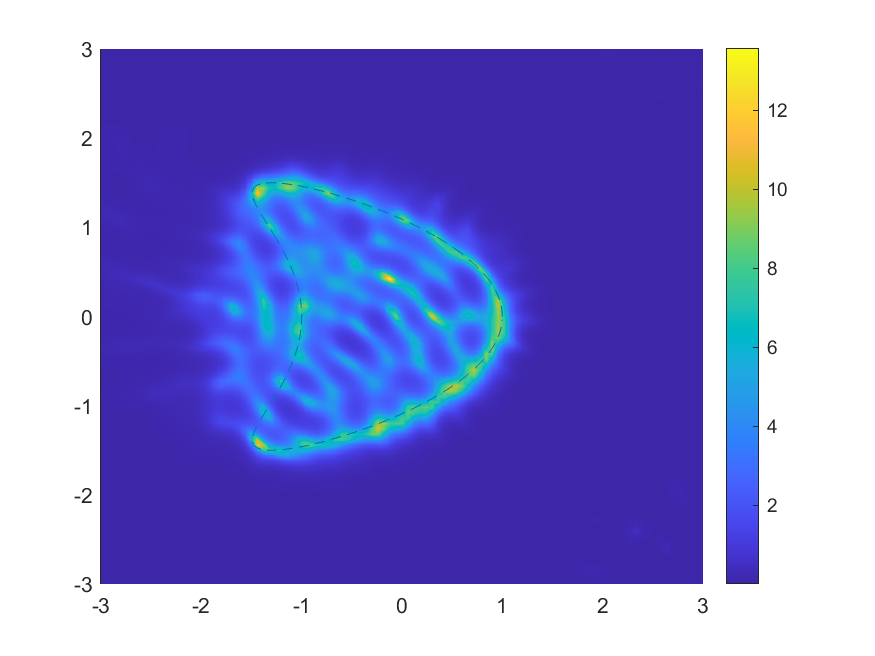 }	
}
 	\subfigure[ $5\% $  noise]{
		\includegraphics[width=2in,height=1.5in]{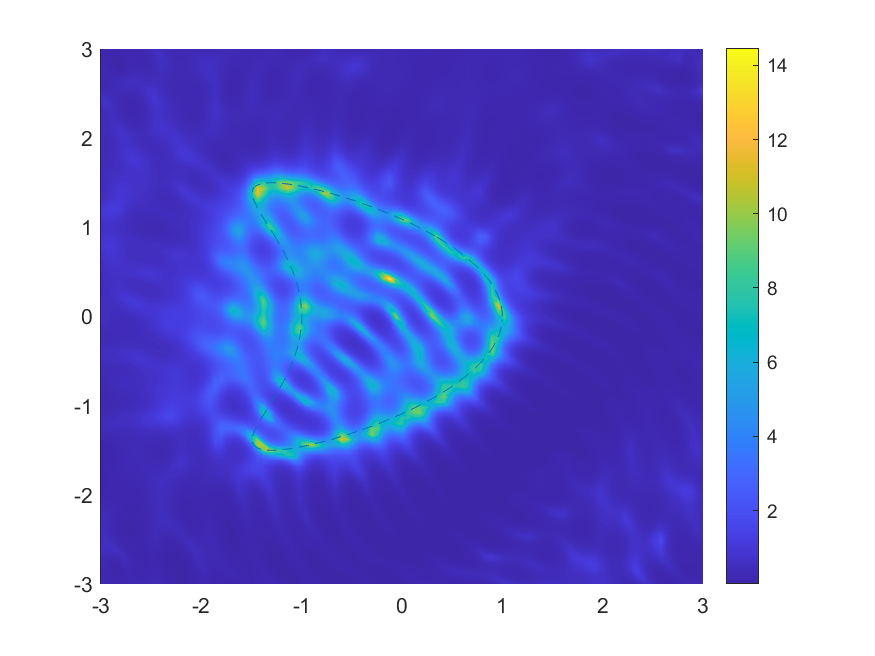}
	}	
 	\subfigure[$10\% $ noise]{
 		\includegraphics[width=2in,height=1.5in]{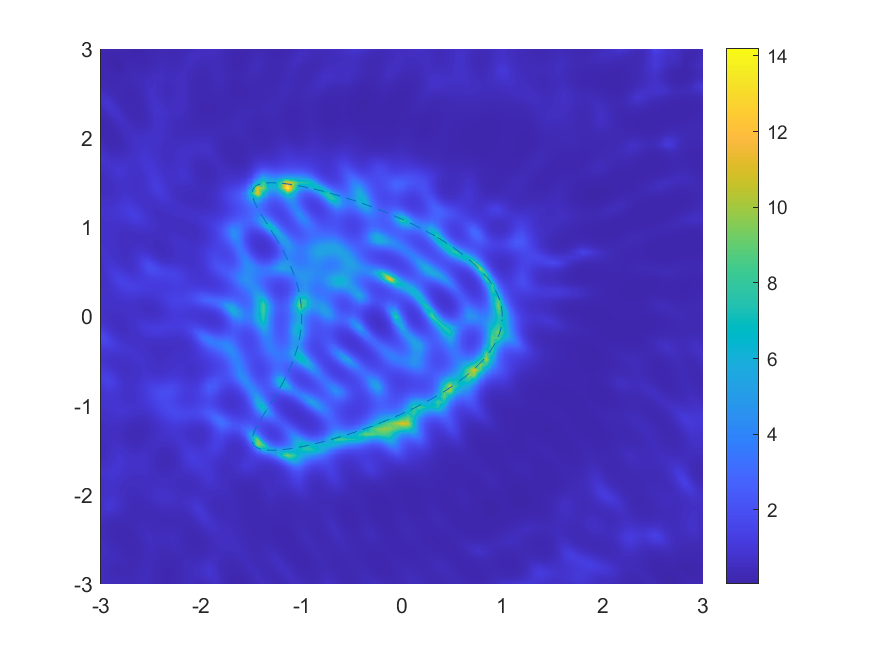 }
 			}

\caption{Reconstruction of the kite-shaped obstacle with $\bm{a}=(\cos \alpha, \sin \alpha)^\top, \alpha=\frac{2}{3}\pi$ . }
\label{fig:kite-noise} 
\end{figure}
Figure \ref{fig:kite-noise} presents reconstruction results from the near-field data with the polarization vector\\ $\bm{a}=(\cos \alpha, \sin \alpha)^\top, \alpha=\frac{2}{3}\pi$ and noise level $\delta=0\%$ in Figure \ref{fig:kite-noise}(b), $\delta=5\%$ in Figure \ref{fig:kite-noise}(c), and $\delta=10\%$ in figure \ref{fig:kite-noise}(d). In Figure \ref{fig:kite-a}, we set noise level $\delta=5\%$ to see the reconstruction results with different choice of $\bm{a}=(\cos \alpha, \sin \alpha)^\top.$ The polarization angel $\alpha$ is chosen to be zero in Figure \ref{fig:kite-a}(a), $1/2\pi$ in Figure \ref{fig:kite-a}(b) and $2/3\pi$ in Figure \ref{fig:kite-a}(c). Figure \ref{fig:kite-a}(d) is a superposition of the results for $\alpha=0, 1/2\pi$ and $2/3\pi$.

\noindent {\bf Example 2}: $D$ is given as a  star-shaped obstacle with the boundary parameterized by
\begin{equation*}
 \bm{x}(t)= (1+0.2\cos(5t))\ (\cos t ,\ \sin t ),\qquad t\in[0,2\pi].
\end{equation*}
Figure \ref{fig:star-noise} presents the reconstruction results from the true data and noised data at levels $5\% $ and $10\% $, respectively. Figure \ref{fig:star-a} shows the results from the data with noise levels at $10\% $ and different polarization vectors $\bm{a}=(\cos \alpha, \sin \alpha)^\top.$ 

\begin{figure} 
	\centering
	\subfigure[$\alpha=0$]{
		\includegraphics[width=2in,height=1.5in]{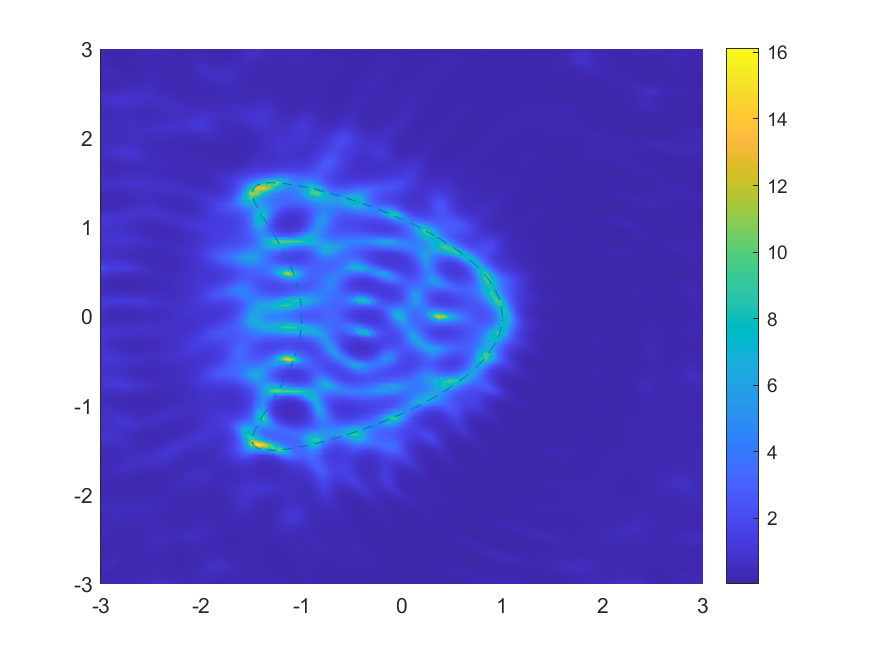}
		}
 	\subfigure[$\alpha=\dfrac{1}{2}\pi$]{
 		\includegraphics[width=2in,height=1.5in]{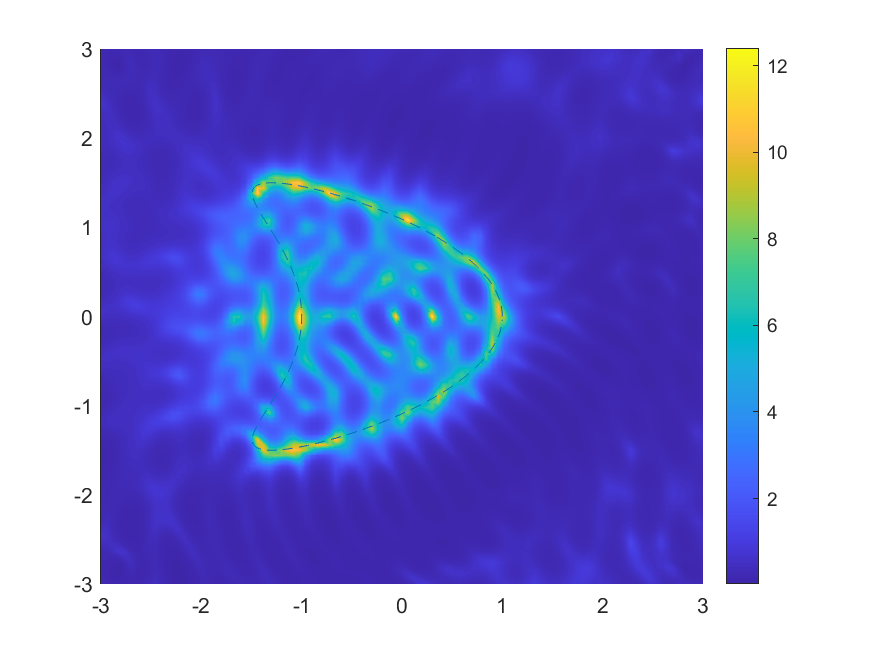 }
 	}
    \subfigure[$\alpha=\dfrac{2}{3}\pi$]{
 		\includegraphics[width=2in,height=1.5in]{Figure/kite/0.05-noise1.eps }
 	}
 	\subfigure[Multiple $\alpha$]{
 		\includegraphics[width=2in,height=1.5in]{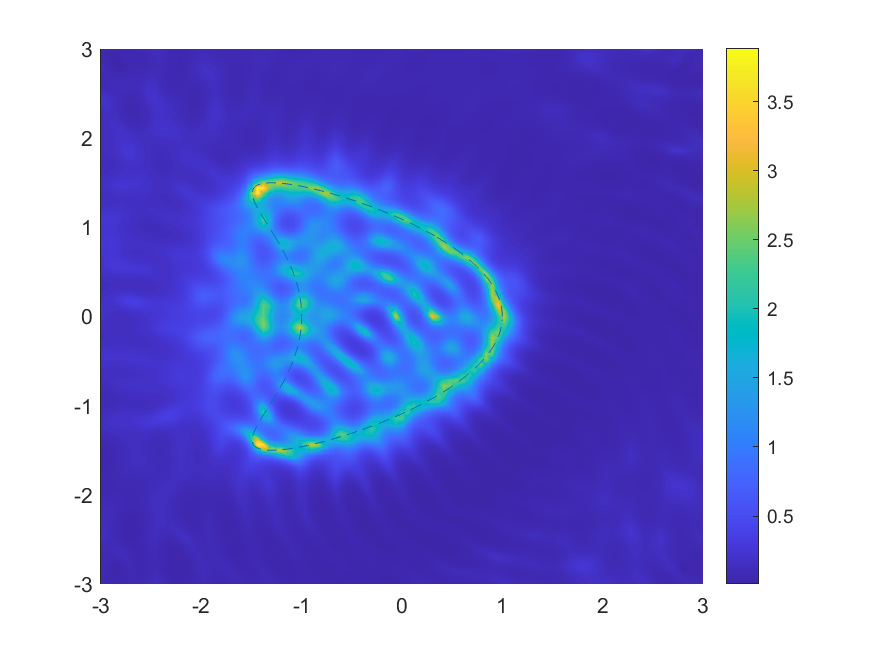 }
 	}
 	
\caption{Reconstruction of the kite-shaped obstacle with different $\alpha$.}
 \label{fig:kite-a} 
\end{figure}

\begin{figure} 
	\centering
	\subfigure[The star-shaped obstacle]{
		\includegraphics[width=2in,height=1.5in]{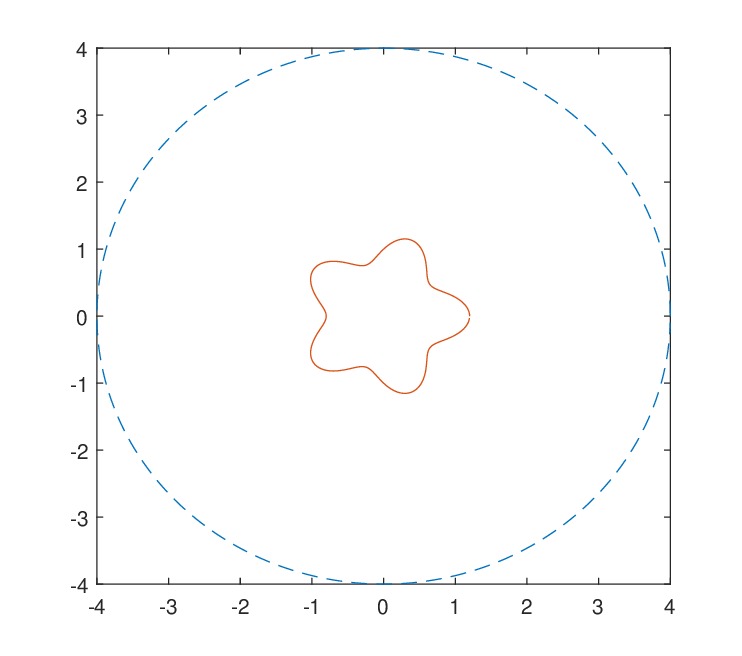}
	}	
 	\subfigure[No noise]{
 		\includegraphics[width=2in,height=1.5in]{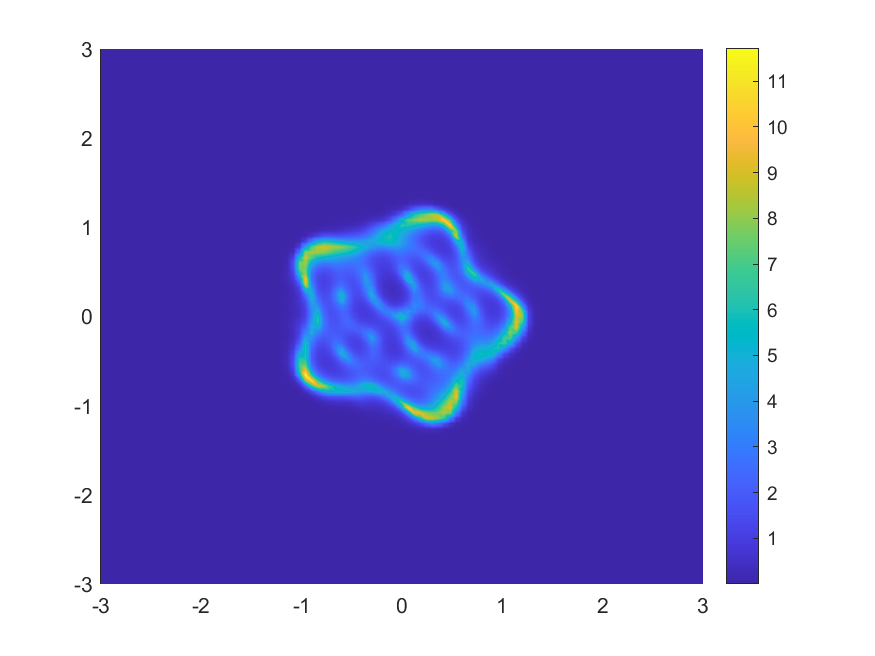 }
 		
}
 	\subfigure[ $5\% $  noise]{
		\includegraphics[width=2in,height=1.5in]{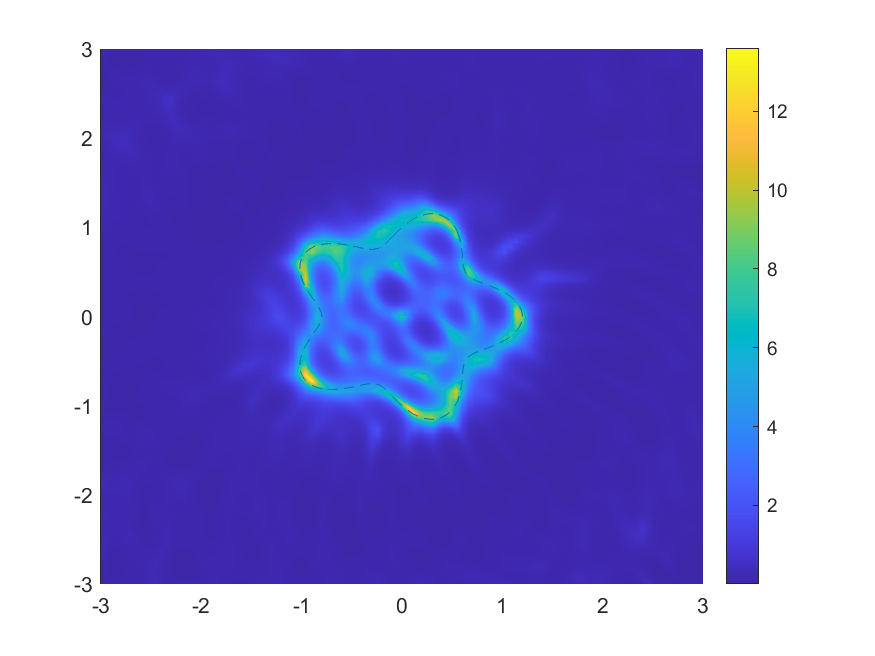}
	}	
 	\subfigure[$10\% $ noise]{
 		\includegraphics[width=2in,height=1.5in]{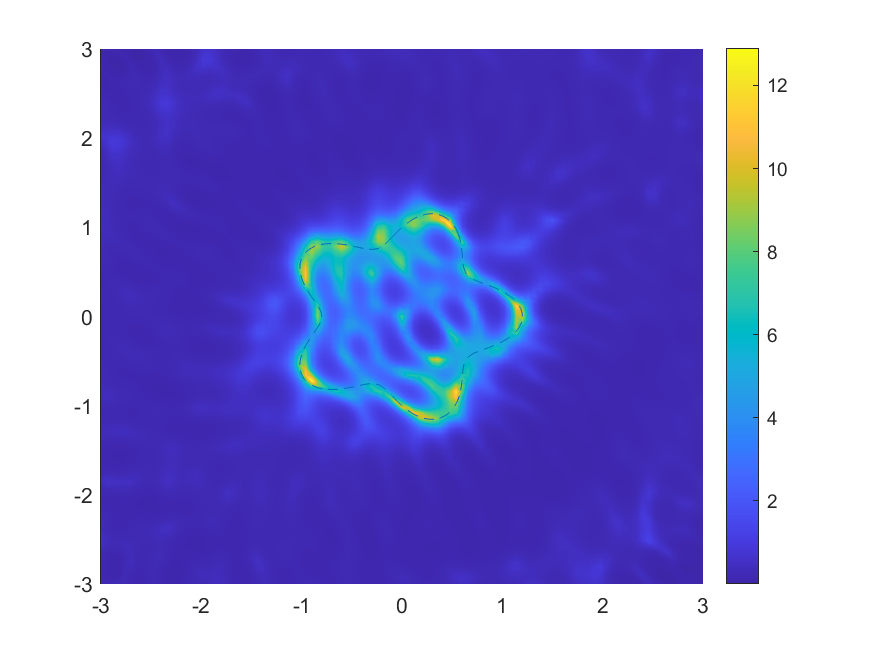 }
 			}

\caption{Reconstruction of the star-shaped obstacle with $\bm{a}=(\cos \alpha, \sin \alpha)^\top, \alpha=\frac{2}{3}\pi$.  }
\label{fig:star-noise}
\end{figure}

\begin{figure}\label{star-a} 
	\centering
	\subfigure[$\alpha=0$]{
		\includegraphics[width=2in,height=1.5in]{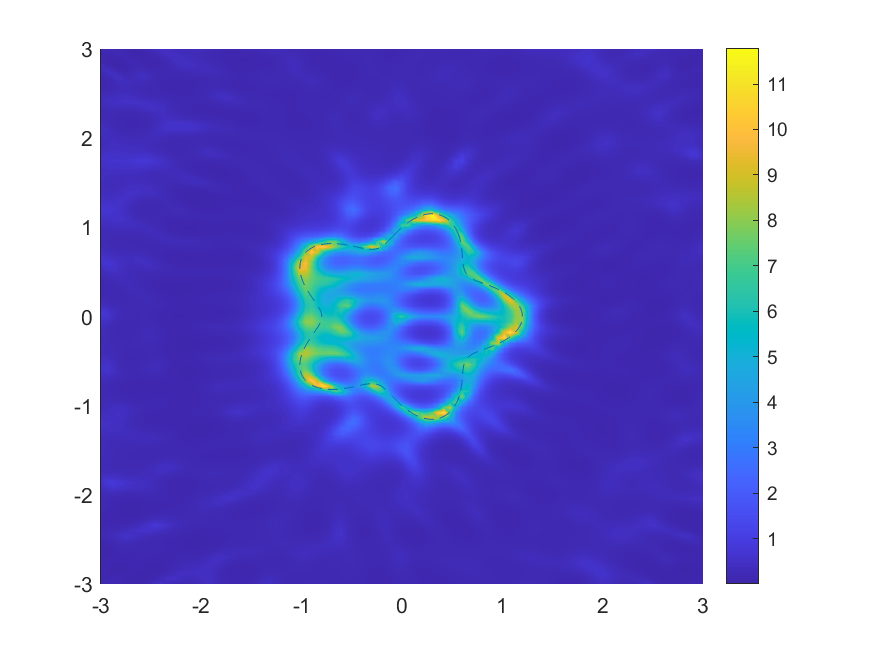}
	}	
 	\subfigure[$\alpha=\dfrac{1}{2}\pi$]{
 		\includegraphics[width=2in,height=1.5in]{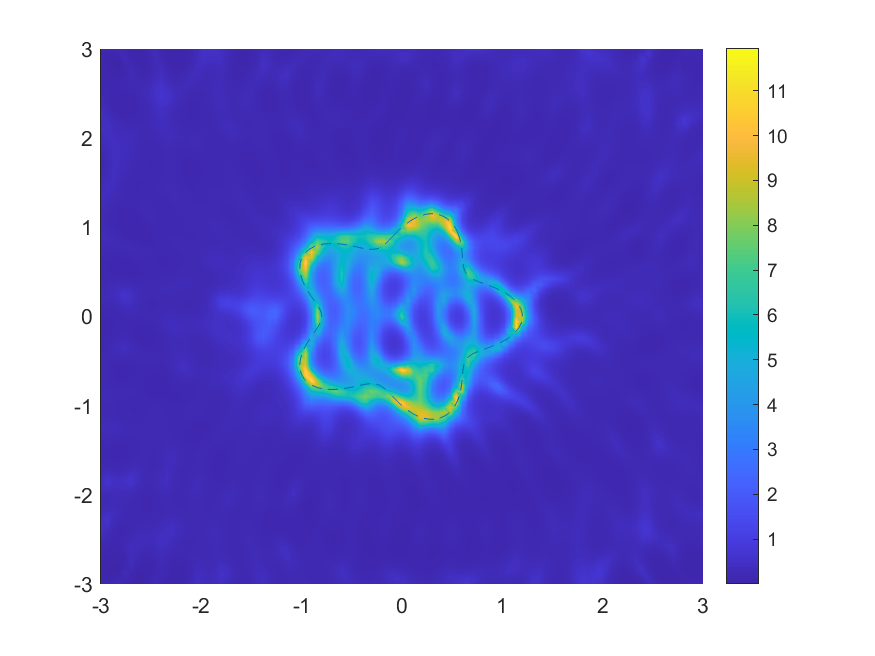 }
 	}
\subfigure[$\alpha=\dfrac{2}{3}\pi$]{
 		\includegraphics[width=2in,height=1.5in]{Figure/star/0.1-noise1.eps }
 	}
 	\subfigure[Multiple $\alpha$]{
 		\includegraphics[width=2in,height=1.5in]{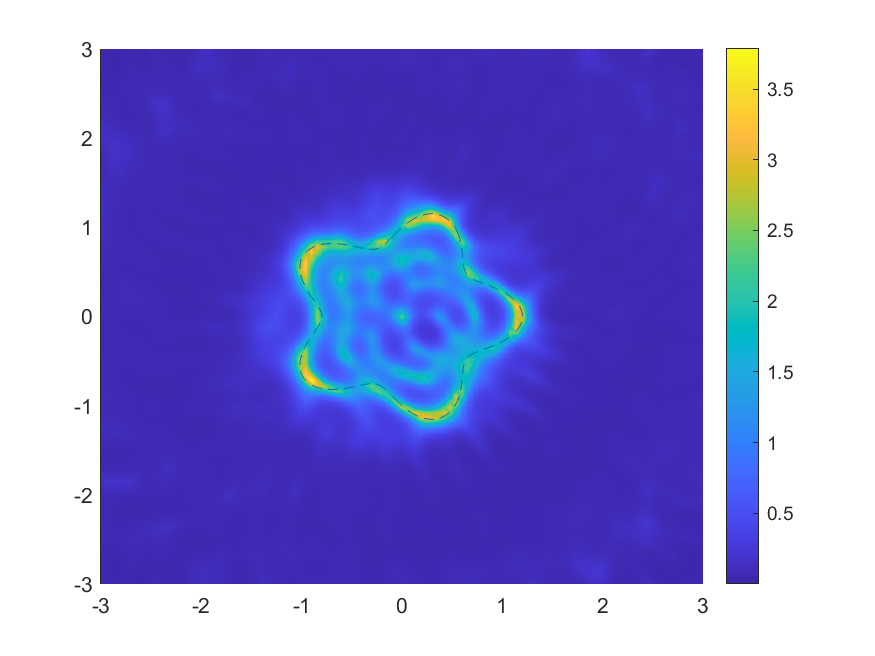 }
 	}

\caption{Reconstruction of the star-shaped obstacle with different $\alpha$.}
\label{fig:star-a}
\end{figure}     
 
\noindent {\bf Example 3}: In the third example, the obstacle $D$ is given by the union of two disjoint components $D_1$ and $D_2$. $D_1$ is a star-shaped obstacle with radius 1 centered at $(2,2)$, and $D_1$ is a kite-shaped obstacle with radius 0.5 and centered at $(-1,-1)$. Figure \ref{fig:double-a} presents the reconstruction results from the data with noise levels at $2\% $ and different polarization vectors $\bm{a}=(\cos \alpha, \sin \alpha)^\top.$ 

The numerical examples presented above demonstrate that the indicator function $W$ peaks on the boundaries of the obstacles and can reconstruct the locations and shapes. Unlike the case in acoustics, the behavior of the indicator function is influenced by the polarization vector $\bm{a}$. For a single obstacle, a single vector $\bm{a}$ suffices to reconstruct the shape and location. However, in the case of two disconnected components, the reconstructions from one polarization vector don't show complete boundary information. The third example highlights that utilizing near-field data with multiple vectors $\bm{a}$ enhances the reconstruction quality on the identification of obstacle boundaries.

\begin{figure}\label{double-a} 
	\centering
	\subfigure[$\alpha=0$]{
		\includegraphics[width=2in,height=1.5in]{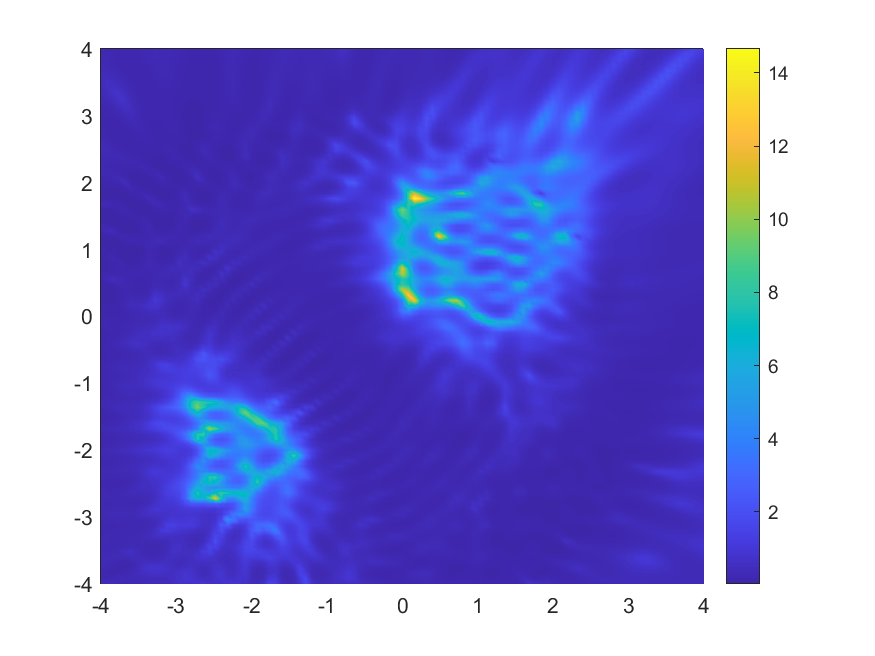}
	}	
 	\subfigure[$\alpha=\dfrac{1}{2}\pi$]{
 		\includegraphics[width=2in,height=1.5in]{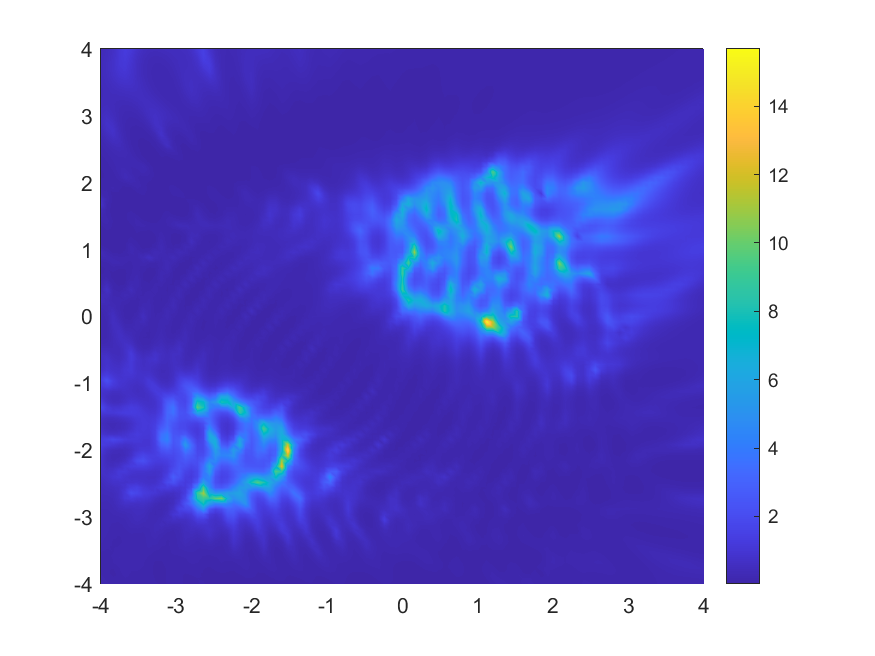 }
 	}
\subfigure[$\alpha=\dfrac{2}{3}\pi$]{
 		\includegraphics[width=2in,height=1.5in]{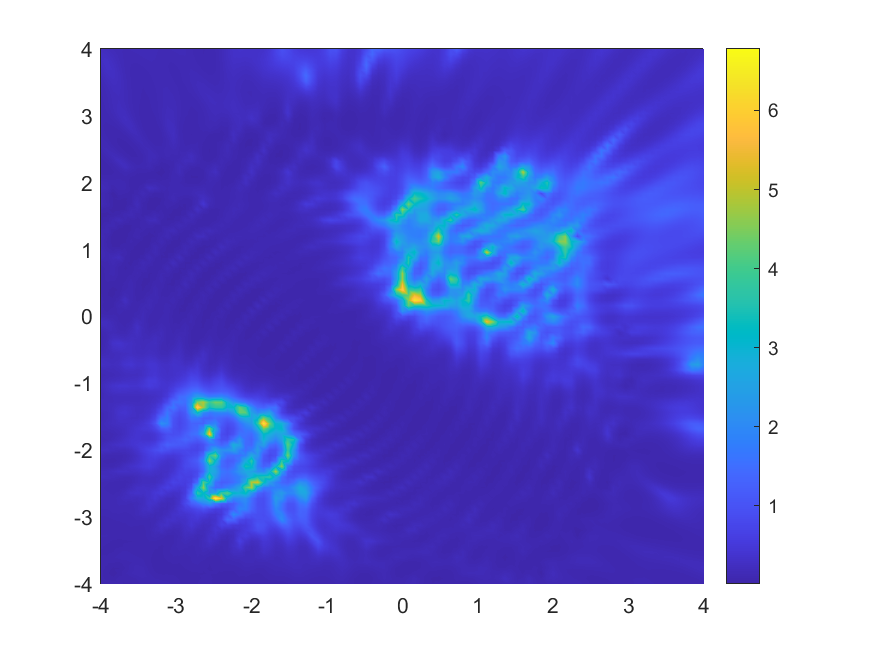 }
 	}
 	\subfigure[Multiple $\alpha$]{
 		\includegraphics[width=2in,height=1.5in]{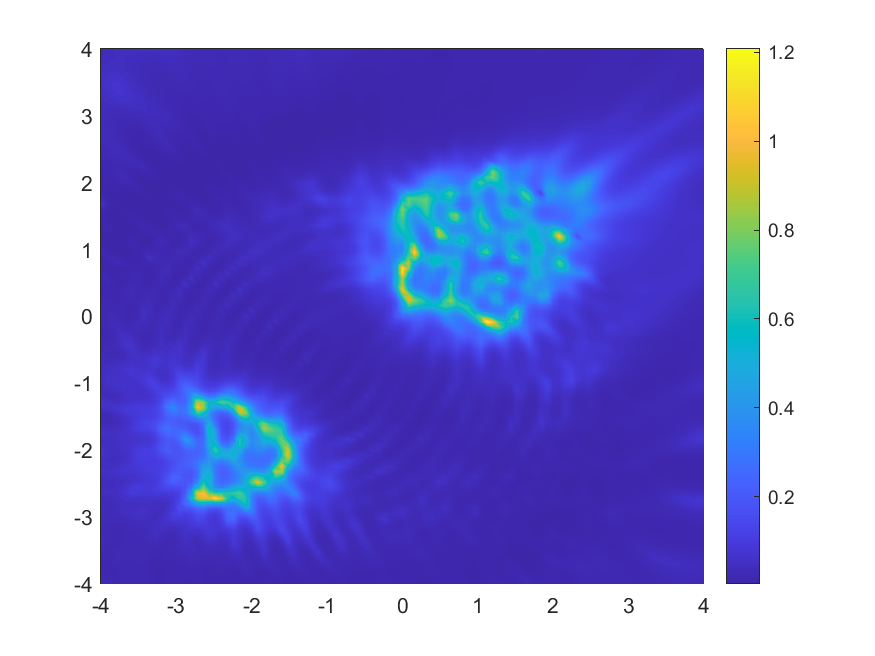 }
 	}

\caption{Reconstruction of two sound-soft obstacles with different $\alpha$.}
\label{fig:double-a}
\end{figure}

\end{document}